\newcommand{\q}[1]{``#1''}
\def \R{\mathbb{R}}
\def \N{\mathbb{N}}
\newcommand{\argmin}{\mathrm{argmin}}
\newcommand{\argmax}{\mathrm{argmax}}
\newtheorem{Theorem}{Theorem}[section]
\newtheorem{Corollary}[Theorem]{Corollary}
\newtheorem{Definition}[Theorem]{Definition}
\newtheorem{Proposition}[Theorem]{Proposition}
\newtheorem{Remark}[Theorem]{Remark}
\newtheorem{Example}{Example}
\numberwithin{equation}{section}
\newcommand\wavydecor{%
    \draw[decoration={coil,aspect=0.1,segment length=5pt,amplitude=1.0pt},decorate,line width=1.5pt,black]
      (O|-P) -- (O);
}
\newmdenv[
hidealllines=true,
innerleftmargin=10pt,
innerrightmargin=0pt,
innertopmargin=0pt,
innerbottommargin=0pt,
leftmargin=-10pt,
skipabove=.5\baselineskip,
skipbelow=.5\baselineskip,
singleextra={\wavydecor},
firstextra={\wavydecor},
secondextra={\wavydecor},
middleextra={\wavydecor}
]{done}
\title{ Multivariate Optimized Certainty Equivalent Risk Measures and their Numerical Computation\thanks{Acknowledgements:  The authors research is part of the ANR project DREAMeS (ANR-21-CE46-0002) and benefited from the support of the "Chair Risques Emergents en Assurance" under the aegis of Fondation du Risque, a joint initiative by Le Mans University and Cov\'ea.}}  
\author{Sarah Kaaka\"i~ \thanks
{\small Laboratoire Manceau de Math\'ematiques \& FR CNRS N\textsuperscript{o} 2962, Institut du Risque et de l'Assurance, Le Mans University.}
\and Anis Matoussi
\thanks{ \small  Laboratoire Manceau de Math\'ematiques \& FR CNRS N\textsuperscript{o} 2962, Institut du Risque et de l'Assurance, Le Mans University and Ecole Polytechnique.
}\\
 \and Achraf Tamtalini~ \thanks
{\small  Laboratoire Manceau de Math\'ematiques \& FR CNRS N\textsuperscript{o} 2962, Institut du Risque et de l'Assurance, Le Mans University.}
}
\date{}
\begin{document}
 \maketitle

\abstract{   We present a framework for constructing multivariate risk measures that is inspired from univariate Optimized Certainty Equivalent (OCE) risk measures. We show that this new class of risk measures verifies the desirable properties such as convexity, monotonocity and cash invariance. We also address numerical aspects of their computations using stochastic algorithms instead of using Monte Carlo or Fourier methods that do not provide any error of the estimation.
  }

\bigskip

\noindent  {\bf Keywords}: Multivariate risk measures, Optimized certainty equivalent, Numerical methods, stochastic algorithms, risk allocations.

\section*{Introduction}
One of the major concerns in finance is how to assess or quantify the risk associated with a random cashflow in the future. Starting with the pioneering work of \citet{markowitz1952}, the risk associated with a random outcome was quantified by its variance.
Then, \citet{artzner} published their famous seminal paper in which they introduce the theory of risk measures. In their paper, risk measures were defined as a map verifying certain properties, which are called \q{axioms}, namely: Subadditivity, translation invariance, monotonocity and positive homogeneity. Such risk measures are called \textit{coherent} risk measures. Many extensions have been proposed and studied in the literature after the introduction of the axiomatic approach. One important extension is the notion of \textit{convex risk measure} developed by \citet{follmer} and \citet{frittelli2002putting} where the subadditivity and positive homogeneity properties were replaced by the weaker property of convexity. The latter reflects the fact that diversification decreases the risk. In the banking industry, one of the most popular risk measures is the Value at Risk (VaR in short). This is due first, to its financial interpretation and second, to its easy and fast implementation. Indeed, VaR is defined as the minimal cash amount that needed to be added to a financial position in order to have a probability of losses below a certain threshold. Its computation amounts to the calculation of a quantile of the portfolio distribution. Nevertheless, VaR suffers from one drawback: it does not verify the convexity property. This has prompted the search for new examples of risk measures, the most prominent being the Conditional Value at Risk (CVaR), the entropic risk measure and the utility based risk measure (also known as shortfall risk measure).\\
Some decision making problem based on utility functions are closely related to risk measures. One can cite the optimized certainty equivalent (OCE) that was first introduced by \citet{ben1986expected}. The idea behind the definition of OCE is as follows: Assume that a decision maker, with some utility function $u$, is expecting a random income $X$ in the future and can consume a part of it at present. If he chooses to consume $m$ dollars, the resulting present value of $X$ is then $P(X, m) \vcentcolon = m + E[u(X-m)]$. Hence, one can define the sure present value of $X$ (i.e., its certainty equivalent) as the result of an optimal allocation of $X$ between present and future consumption, that is the decision maker will try to find $m$ that maximizes $P(X,m)$. The main properties of the OCE were studied in \citet{ben2007old} where it is showed that the opposite of the OCE provides a wide family of risk measures that verifies the axiomatic formalism of convex risk measures. They also proved that several risk measures, such as CVaR and the entropic risk measure, can be derived as special cases of the OCE by using particular utility functions (see also \citet{cherny2007divergence}).\\
From a systemic point of view, the financial crisis of $2008$ has demonstrated the need for novel approaches that capture the risk of a system of financial institutions. More precisely, given a network/system of $d \in \N$ different but dependent portfolios $X \vcentcolon =(X_1,..., X_d)$, we are interested in measuring/quantifying the risk carried by this system of portfolios. A classical approach consists in first aggregating the portfolios using some aggregation function $\Lambda: \R^d \to \R$ and then apply some univariate risk measure applied to the aggregated portfolio. In practice, most of the times the aggregation function is just the sum of the components, i.e., $\Lambda(x) = \sum_{i=1}^d x_i$. This will result in having a systemic risk measure of the form: $R(X) = \eta(\Lambda(X)) = \eta(\sum_{i=1}^d X_i)$, where $\eta$ is a univariate risk measure, such as the VaR, CVaR, entropic risk measure, etc. The mechanism behind this approach is also known as \q{Aggregate then Inject Cash} mechanism (see \citet{unified}). However, this approach suffers from one major drawback: While it quantifies the systemic risk carried by the whole system, it does not provide risk levels of each portfolio, and thus, one could not have a ranking of portfolios in terms of their systemic riskiness. One way to remediate to this, is to consider the reverse mechanism, that is to \q{Inject Cash then Aggregate}. This consists in associating to each portfolio a risk measure and summing up the resulting risk levels. This results in considering systemic risk measures $R(X)$ of the following form: $R(X) = \sum_{i=1}^d \eta_i(X_i)$, where $\eta_i$'s are the univariate risk measures associated to each portfolio. Obviously, one could use the same univariate for all portfolios, that is $\eta_i = \eta, \forall i \in \{1,...,d\}$. However, by doing so, we are assuming that the system is made of \q{isolated} portfolios with no interdependence structure, and hence, we might be overestimating or underestimating the systemic risk. This led several authors to look for approaches that address simultaneously the design of an overall risk measure and the allocation of this risk measure among the different components of the system. In this spirit, an extension of shortfall risk measures, introduced in \citet{follmer2002convex}, has been studied in \citet{armenti} based on multivariate loss functions. However, one should note that, to ensure the existence of optimal allocation problem, these loss functions must verify a key property: permutation invariance. In other words, each component of the system is treated as if it has the same risk profile as all the other components and thus one cannot discriminate a particular component against one another. Moreover, classical risk measures such that the CVaR and the entropic risk measure cannot be recovered using multivariate shortfall risk measures, which limit their use in practice. We will see that, with our multivariate extension of OCE risk measure, the permutation invariance condition is no longer needed and by choosing the appropriate loss functions, we can retrieve most of the classical risk measures.\\
One of the major issues that arises when studying risk measures is their numerical approximation. The standard VaR can be computed by inverting the simulated empirical distribution of the financial position using Monte Carlo (see \citet{glasserman} and \citet{glasserman1}). An alternative method for computing VaR and CVaR is to use stochastic algorithms (SA). The rational idea behind this perspective comes from the fact that both VaR and CVaR are the solutions and the value of the same convex optimization problem as pointed out in \citet{rockafellar2002conditional} and the fact that the objective function is expressed as an expectation. This was done in \citet{pages}, where they prove the consistency and the asymptotic normality of the estimators. In the same direction, in \citet{hal2}, we extended the work of \citet{stochastic}  to approximate multivariate shortfall risk measures using stochastic algorithms. In \citet{neufeld2008antonis}, they developed numerical schemes for the computations of univariate OCE using Fourier transform methods.\\
The outline of this paper is as follows: in section \ref{MOCE}, we give the definition of multivariate OCE by introducing first the class of appropriate loss functions. Then, we show that this class of risk measures verifies the desirable properties. We also characterize the optimal solutions, give a dual representation and study the sensitivity with respect to external shocks. Finally, section \ref{ComputationalAspects} treats the computational aspects of approximating multivariate OCE using a deterministic scheme and a stochastic one. 
\section{Multivariate OCE}\label{MOCE}
Let $(\Omega, \mathcal{F}, P)$ a probability space and we denote by $L^0(\R^d)$ the space of $\mathcal{F}$- measurable random vectors taking values in $\R^d$. For $x, y$ in $\R^d$, we denote by $||\cdot||$ the Euclidean norm and $x \cdot y = \sum x_i y_i$. For a function $f :\R^d \to [-\infty, \infty]$, we define $f^*$ the convex conjugate of $f$ as $f^*(y) = \sup_x \{x \cdot y - f(x)\}$. The space $L^0(\R^d)$ inherits the lattice structure of $\R^d$ and hence, we can use the classical notations in $\R^d$ in a $P$-almost-surely sens. We will say for example, for $X, Y \in L^0(\R^d)$ that $X \ge Y$ if $P(X \ge Y) =  1$. To alleviate the notations, we will drop the reference to $\R^d$ in $L^0(\R^d)$ whenever it is unnecessary. For $Q = (Q_1,...,Q_d)$ a vector of probabilities, we will write $Q \ll P$ if for all $i=1,...,d$, we have $Q_i \ll P$.
In this section, we introduce the notion of multivariate Optimized Certainty Equivalent (OCE) and give its main properties. The latter is an extension of univariate OCE that was introduced and studied in details in \citet{ben2007old}. First, we start by giving the definition of a multivariate loss function that will be used in the rest of the paper. For the rest of the paper, the random vector $X = (X_1,...,X_d) \in L^0$ represents profits and losses of $d$ portfolios.
\begin{Definition}\label{defLoss}
A function $l:\R^d \mapsto (-\infty, \infty]$  is called a loss function, if it satisfies the following properties:
\begin{enumerate}
\item $l$ is nondecreasing, that is if $x \le y$ componentwise, then $l(x) \le l(y)$.
\item $l$ is lower-semicontinuous and convex.
\item $l(0) = 0$ and  $l(x) > \sum_{i=1}^d x_i,~\forall x \neq 0$.
\end{enumerate}
\end{Definition}
\noindent For integrability reasons, we will work in the multivariate Orlicz heart defined as:
$$ M^\theta \vcentcolon = \{ X \in L^0: E[\theta(\lambda X)] < \infty, \forall \lambda > 0\},$$
where $\theta(x) = l(|x|), x \in \R^d$. On this space, we define the Luxembourg norm as:
$$ ||X||_\theta \vcentcolon = \left\{ \lambda > 0, E\left[ \theta \left(\frac{|X|}{\lambda}\right)\right] \le 1 \right\}.$$
Under the Luxembourg norm, $M^\theta$ is a Banach lattice and its dual with respect to this norm is given by the Orlicz space $L^{\theta^*}$:
$$L^{\theta^*} \vcentcolon = \{ X \in L^0, E[\theta^*(\lambda X)] < \infty,~\text{for some}~ \lambda > 0\}.$$ 
We also introduce the set of $d$-dimensional measure densities in $L^{\theta^*}$, that is:
$$ \mathcal{Q}^{\theta^*} \vcentcolon = \left\{\frac{dQ}{dP} \vcentcolon = (Z_1,...,Z_d), Z \in L^{\theta^*}, Z_k \ge 0~\text{and}~E[Z_k] = 1\right\}.$$
Note that for $Q \in \mathcal{Q}^{\theta^*}$ and $X \in M^\theta$, $\frac{dQ}{dP}\cdot X \in L^1$, thanks to Fenchel inequality and for the sake of simplicity, we will write $E_Q[X] \vcentcolon= E[dQ/dP \cdot X]$. We refer to Appendix B in \citet{armenti} for more details about multivariate Orlicz spaces.
\begin{Definition}
Assume $l$ is a loss function. The multivariate OCE risk measure is defined for every $X \in M^\theta$ as:
\begin{equation}\label{OCE}
R(X) = \underset{w \in \R^d}{\inf}\left\{\sum_{i=1}^d w_i + E[l(-X - w)]\right\}.
\end{equation}
\end{Definition}
\begin{Example}
\label{example}
When $d=1$, we can recover some important convex risk measures such CVaR (also called Expected Shortfall or Average Value at Risk) and Entropic risk measure.
\begin{enumerate}
\item CVaR: Let $\alpha\in (0,1)$ and take $l(x)=\frac{1}{1-\alpha}x^+$, then the associated risk measure is the CVaR (see \citet{rockafellar2002conditional}).
\item Polynomial loss function: For an integer $\gamma > 1$, the polynomial loss function is defined by: $l(x)=\frac{([1+x]^+)^\gamma-1}{\gamma}$. When $\gamma = 2$, the corresponding risk measure is the Monotone Mean-Variance (see \citet{vcerny2012computation}).
\item Entropic risk measure: Fix $\lambda > 0$ and let $l(x) \vcentcolon=\frac{\exp(\lambda x)-1}{\lambda} $. Then, the problem in \eqref{OCE} can be explicitly solved and the optimal $w^*$ and $R(X)$ are given by: 
$$w^* = \frac{1}{\lambda}\log(E[e^{-\lambda X}]),~R(X) = w^* =  \frac{1}{\lambda}\log(E[e^{-\lambda X}]).$$
\end{enumerate} 
Using univariate loss functions, we can construct multivariate loss functions in the following way: Given $l_1,..., l_d$ univariate loss functions and a nonnegative, convex and lower-semicontinuous function $\Lambda:\R \to \R^+$ with $\Lambda(0)=0$,
one can define a multivariate loss function as follows:
\begin{equation}\label{constructionLoss}
l(x) \vcentcolon= \sum_{i=1}^d l_i(x_i) + \Lambda(x).
\end{equation}
It is easy to see that $l$ verifies all the conditions in the definition \ref{defLoss}. Note that by taking $\Lambda$ the null function, the corresponding multivariate OCE boils down to a sum of univariate OCE. It is in this function $\Lambda$ where the dependence between the different components in the system is taken into account. In this paper, we will focus on the following multivariate loss functions inspired from the univariate risk measures above:
\begin{align}
l(x) &= \sum_{i=1}^d \frac{e^{\lambda_i x_i}-1}{\lambda_i} + \alpha e^{\sum_{i=1}^d \lambda_i x_i}, ~\lambda_i > 0,~\alpha \ge 0,\label{exponential}\\
l(x) &= \sum_{i=1}^d \frac{([1 +x_i]^+)^{\theta_i} - 1}{\theta_i} + \alpha \sum_{i < j} \frac{([1 +x_i]^+)^{\theta_i}}{\theta_i} \frac{([1 +x_i]^+)^{\theta_j}}{\theta_j},~ \theta_i > 1,~\alpha \ge 0,\label{polynomial}\\
l(x) &= \sum_{i=1}^d \frac{x_i^+}{1 -\beta_i} + \alpha \sum_{i < j} \frac{x_i^+}{1 -\beta_i} \frac{x_j^+}{1 -\beta_j},~0 < \beta_i < 1,~\alpha \ge 0.\label{cvar}
\end{align}
\end{Example}
In the next theorem, we show that the multivariate OCE is a convex risk measure as defined in \citet{follmer2002convex}. 
\begin{Theorem}
The function $R$ in \eqref{OCE} is real valued, convex, monotone and cash invariant\footnote{In the following sens: $R(X+m)=R(X) - \sum_{i=1}^d m_i$} risk measure. In particular, it is continuous and subdifferentiable. If $l$ is positive homogeneous, then $R$ is too. Furthermore, it admits the following representation:
\begin{equation}\label{Representation}
R(X) = \underset{Q \in \mathcal{D}^{l^*}}{\max} \{E_Q[-X] - \alpha(Q)\},
\end{equation}
where the penalty function $\alpha$ is defined for $Q = (Q_1,...,Q_d) \ll P$ by: $\alpha(Q) = E\left[l^*\left(\frac{d Q}{d P}\right)\right]$ and $\mathcal{D}^{l^*}=  \{Q \ll P, ~\alpha(Q) < \infty \} \vcentcolon = dom(\alpha)$.
\end{Theorem}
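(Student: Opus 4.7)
The plan is to verify the algebraic axioms of a convex risk measure directly from the definition (\ref{OCE}), then derive real-valuedness, continuity, and subdifferentiability from the abstract structure of $M^\theta$, and finally establish the dual representation by Fenchel duality applied at the integrand level.

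For the algebraic properties, cash invariance would come from the change of variable $w \mapsto w-m$ in (\ref{OCE}), where the extra term $-\sum_i m_i$ appears cleanly from the linear piece $\sum_i w_i$. Monotonicity follows from $l$ being nondecreasing: if $Y \le X$ then $-X-w \le -Y-w$, so $l(-X-w) \le l(-Y-w)$ pointwise. Convexity reduces to the joint convexity of $(X,w) \mapsto \sum_i w_i + E[l(-X-w)]$, since $l$ is convex and composed with an affine map, together with the fact that partial minimization preserves convexity. Positive homogeneity (when $l$ is) follows from the substitution $w \mapsto \lambda w$.

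For real-valuedness, taking $w = 0$ gives $R(X) \le E[l(-X)] \le E[\theta(X)] < \infty$ by $\theta(x) = l(|x|)$, monotonicity of $l$, and the definition of $M^\theta$. The lower bound $R(X) \ge -E[\sum_i X_i] > -\infty$ comes from the inequality $l(u) \ge \sum_i u_i$, which is the closure of the strict inequality in Definition~\ref{defLoss}. Continuity and subdifferentiability on all of $M^\theta$ then follow from the extended Namioka--Klee theorem: a proper, convex, monotone, real-valued function on an Orlicz heart is automatically norm-continuous and subdifferentiable.

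The dual representation is the main step. The plan is to dualize the integrand, $l(u) = \sup_v \{v \cdot u - l^*(v)\}$, and lift this to random vectors via the $(M^\theta, L^{\theta^*})$ duality, obtaining
\begin{equation*}
R(X) = \inf_{w \in \R^d} \sup_{V \in L^{\theta^*}} \left\{\sum_i w_i + E\bigl[V \cdot (-X-w) - l^*(V)\bigr]\right\}.
\end{equation*}
I would then swap inf and sup using either Rockafellar's interchange theorem for normal convex integrands or a minimax argument based on convexity in $w$, concavity in $V$, and weak* compactness of sublevel sets of $V \mapsto E[l^*(V)]$; continuity from below of $R$ (from the Namioka--Klee step) ensures attainment. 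After the exchange, the infimum in $w$ equals $-\infty$ unless $E[V_i] = 1$ for every $i$, in which case it vanishes. The sign constraint $V \ge 0$ is automatic: sending $x_i \to -\infty$ in $l^*(v) = \sup_x\{x \cdot v - l(x)\}$ and using that $l$ is nondecreasing yields $l^*(v) = +\infty$ whenever any $v_i < 0$. Recognizing the remaining $V$ as densities $dQ/dP$ with $Q \in \mathcal{Q}^{\theta^*}$ identifies the effective domain as $\mathcal{D}^{l^*}$ and produces the stated formula with $\alpha(Q) = E[l^*(dQ/dP)]$.

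I expect the main obstacle to be the rigorous minimax exchange together with the attainment of the supremum: existence of an optimizer relies on weak* compactness of sublevel sets of $\alpha$ inside $\mathcal{Q}^{\theta^*}$, and the key ingredient there will be the superlinear growth of $l^*$ inherited from the superlinearity of $l$ at infinity, itself encoded in the strict inequality $l(x) > \sum_i x_i$ of Definition~\ref{defLoss}.
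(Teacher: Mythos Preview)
Your treatment of the algebraic axioms, real-valuedness, and the appeal to the extended Namioka--Klee theorem for continuity and subdifferentiability matches the paper essentially verbatim. The representation step, however, is organized differently. The paper first applies Fenchel--Moreau to $R$ itself, obtaining $R(X)=\max_{Y\in L^{\theta^*}}\{E[X\cdot Y]-R^*(Y)\}$ with the maximum attained as an automatic consequence of the continuity you already established; it then restricts the domain of $R^*$ using monotonicity and cash invariance, and only afterwards unwinds $R^*(-Q)$ by interchanging $\sup_X$ and $E$ via Rockafellar's integral-functional result. You instead dualize $l$ pointwise first, lift via Rockafellar's interchange to write $E[l(-X-w)]=\sup_V\{\cdots\}$, and then face an $\inf_w\sup_V$ problem where the swap and the attainment must be argued directly. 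Both routes are legitimate and both invoke the same Rockafellar interchange, just at different stages; the paper's ordering has the advantage that attainment comes for free from Fenchel--Moreau plus continuity, so no compactness argument is needed.

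This matters because your proposed mechanism for attainment, namely weak$^*$ compactness of sublevel sets of $\alpha$ driven by superlinear growth of $l^*$, is not available in general under Definition~\ref{defLoss}. For the CVaR-type loss $l(x)=\sum_i x_i^+/(1-\beta_i)$ in \eqref{cvar}, the conjugate $l^*$ is the indicator of a box and has no growth at all, so the de la Vall\'ee Poussin argument you sketch does not apply. The clean fix is precisely the paper's: once Namioka--Klee gives norm continuity of $R$, Fenchel--Moreau already delivers the $\max$ over $L^{\theta^*}$, and your integrand-level computation then identifies the penalty as $E[l^*(dQ/dP)]$ without any further compactness input.
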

\begin{proof}\
\begin{itemize}
\item $R(X) \in \R$ for all $X \in M^\theta$: Since $M^\theta \subseteq L^1$, by the third property of loss functions, we have for every $X\in M^\theta$ and $w \in \R^d$: $\sum_{i=1}^d w_i + E[l(-X - w)] \ge E[-\sum_{i=1}^d X_i] > -\infty$. $R(X) < + \infty$ since for $w=0$, we have $E[l(-X)] < \infty$.
\item Monotonicity: Let $X, Y \in M^\theta$ such that $X \le Y$. Since $l$ is non-decreasing, then $E[l(-X - w)] \ge E[l(-Y - w)]$ for every $w \in \R^d$, which in turn implies $R(X) \ge R(Y)$. 
\item Convexity: Let $X, Y \in M^\theta$ and $\lambda \in [0, 1]$. We have thanks to the convexity of $l$:
\begin{align*}
R(\lambda X + (1 -\lambda)Y) &= \underset{w \in \R^d}{\inf}\{\sum_{i=1}^d w_i + E[l(-X-w)]\}\\
&= \underset{w^1, w^2 \in \R^d}{\inf}\left\{\sum_{i=1}^d \lambda w^1_i + (1-\lambda)w^2_i + E[l(\lambda (-X - w^1) + (1-\lambda)(-Y - w^2))]\right\}\\
&\le \underset{w^1 \in \R^d}{\inf}\underset{w^2 \in \R^d}{\inf}\left\{\lambda (\sum_{i=1}^d w^1_i + E[l((-X - w^1)] + (1-\lambda)(\sum_{i=1}^d w^2_i + E[l((-X - w^2)])\right\}\\
&= \lambda R(X) + (1 -\lambda)R(Y).
\end{align*}
\item Cash Invariance: Let $m\in \R^d$, we have:
\begin{align*}
R(X+m)&=\underset{w \in \R^d}{\inf}\left\{\sum_{i=1}^d w_i +E[l(-X-m-w)]\right\}\\
&= \underset{w \in \R^d}{\inf}\left\{\sum_{i=1}^d (w_i + m_i) - \sum_{i=1}^d m_i + E[l(-X-m-w)]\right\}\\
&=R(X) - \sum_{i=1}^d m_i
\end{align*}
\item Continuity and subdifferentiability: Since $(M^\theta, ||\cdot||_\theta)$ is a Banach space, this is a direct consequence of Theorem 4.1 in \citet{cheridito2009risk} or Theorem 1 in \citet{biagini2009extension}.
\item Positive homogeneity: If $l$ is positive homogeneous, then by the definition of $R(X)$, we have for $\lambda > 0$:
\begin{align*}
R(\lambda X) &= \underset{w \in \R^d}{\inf}\left\{\sum_{i=1}^d w_i +E[l(-\lambda X - w)]\right\}\\
&= \underset{w \in \R^d}{\inf}\left\{\sum_{i=1}^d w_i +\lambda E[l(-X-\frac{w}{\lambda})]\right\}\\
&= \lambda \underset{w \in \R^d}{\inf}\left\{\sum_{i=1}^d \frac{w_i}{\lambda} +E[l(-X-\frac{w}{\lambda})]\right\}\\
&= \lambda \underset{w \in \R^d}{\inf}\left\{\sum_{i=1}^d w_i +E[l(-X-w)]\right\} = \lambda R(X).
\end{align*}
\item Representation: First, because $R$ is convex and continuous, Fenchel-Moreau theorem implies that:
\begin{equation}\label{representation}
R(X) = \underset{Y \in L^{\theta^*}}{\sup}\{E[X\cdot Y] - R^*(Y)\} = \underset{Y \in L^{\theta^*}}{\max}\{E[X\cdot Y] - R^*(Y)\}
\end{equation}
where $R^*(Y) = \sup\{E[X\cdot Y] - R(X), X \in M^\theta\}, Y \in L^{\theta^*}$. Now, if $Y \nleq 0$, then by the bipolar theorem, there exists $X_1 \in M^\theta$ such that $X_1  \ge 0$ and $E[X_1 \cdot Y] > 0$. Using the definition of $R^*(Y)$, we get the following:
\begin{align*}
R^*(Y) &= \underset{X \in M^\theta}{\sup}\{E[X\cdot Y]- R(X)\}\\
&\ge \underset{\lambda > 0}{\sup} \{\lambda E[X_1 Y] - R(\lambda X_1)\}\\
&\ge \underset{\lambda > 0}{\sup} \{\lambda E[X_1 Y] \}- R(0)\} = +\infty,
\end{align*}
where the last inequality is due to the monotonicity of $R$. Therefore, the maximum can be taken over $Y \le 0$. For $k \in \{1,...,d\}$, let $X = (0,...,x,...)$ and $x > 0$. By the translation invariance property, we have $R(X_k) = R(0) - x$. Consequently,
\begin{align*}
R^*(Y) &= \underset{X \in M^\theta}{\sup}\{E[X\cdot Y]- R(X)\}\\
&\ge x E[Y_k] - R(0) + x = x(E[Y_k]+1)-R(0).
\end{align*}
If $E[Y_k] \neq -1$, then by sending $x$ to infinity, we get that $R^*(Y) = \infty$. Finally, this shows that the maximum in \eqref{representation} could be taken over $\mathcal{D}^{\theta^*}$, i.e., $R(X) = \underset{Q \in \mathcal{D}^{\theta^*}}{\max}\{E[-\frac{dQ}{dP}\cdot X] - R^*(-Q)\} = \underset{Q \in \mathcal{D}^{\theta^*}}{\max}\{E_Q[-X] - R^*(-Q)\}$. Let us now explicit more the expression of $R^*(-Q)$ for $Q \in \mathcal{D}^{\theta^*}$:
\begin{align*}
R^*(-Q) &= \underset{X \in M^\theta}{\sup} \left\{E\left[-\frac{dQ}{dP} \cdot X\right] - R(X)\right\}\\
&= \underset{X \in M^\theta}{\sup}\left\{E\left[-\frac{dQ}{dP} \cdot X\right] -\left( \underset{m \in \R^d}{\inf}\sum_{i=1}^d m_i + E[l(-X-m)]\right)\right\}\\
&= \underset{X \in M^\theta}{\sup}\left\{E\left[-\frac{dQ}{dP} \cdot X\right]+ \underset{m \in \R^d}{\sup}\left(-\sum_{i=1}^d m_i - E[l(-X-m)]\right)\right\}\\
&= \underset{X \in M^\theta}{\sup}\underset{m \in \R^d}{\sup}\left\{E\left[-\frac{dQ}{dP} \cdot X\right]-\sum_{i=1}^d m_i - E[l(-X-m)]\right\}\\
&=\underset{m \in \R^d}{\sup}\underset{X \in M^\theta}{\sup}\left\{E\left[-\frac{dQ}{dP} \cdot X\right]-\sum_{i=1}^d m_i - E[l(-X-m)]\right\}\\
&= \underset{m \in \R^d}{\sup}\left\{-\sum_{i=1}^d m_i + \underset{X \in M^\theta}{\sup} \left(E\left[-\frac{dQ}{dP} \cdot X\right]- E[l(-X-m)]\right)\right\}\\
&= \underset{m \in \R^d}{\sup}\left\{-\sum_{i=1}^d m_i+ \underset{W \in M^\theta}{\sup}\left( E\left[-\frac{dQ}{dP} \cdot (W-m)\right]- E[l(-W)]\right)\right\}\\
&= \underset{m \in \R^d}{\sup}\left\{-\sum_{i=1}^d m_i + E\left[\frac{dQ}{dP} \cdot m\right] + \underset{W \in M^\theta}{\sup}\left( E\left[-\frac{dQ}{dP} \cdot W\right]- E[l(-W)]\right)\right\}\\
&= \underset{m \in \R^d}{\sup}\left\{\sum_{i=1}^d -m_i + m_iE\left[\frac{dQ_i}{dP}\right] + \underset{W \in M^\theta}{\sup}\left( E\left[\frac{dQ}{dP} \cdot W\right]- E[l(W)]\right)\right\}\\
&= \underset{m \in \R^d}{\sup}\left\{ 0 + \underset{W \in M^\theta}{\sup} E\left[\frac{dQ}{dP} \cdot W - l(W)\right]\right\}\\
&= \underset{W \in M^\theta}{\sup} E\left[\frac{dQ}{dP} \cdot W - l(W)\right]
\end{align*}
Note that, for $W \in M^\theta$, we have for $Q \in \mathcal{D}^{\theta^*}$, $\frac{d Q}{d P} \cdot W \in L^1$, thanks to Fenchel inequality. Furthermore, since  $\sum W_i \le l(W) \le \theta(W)$ and both $\theta(W)$ and $\sum W_i$ are in $L^1$, we have $l(W) \in L^1$. This allows us to write in the lines above $E[\frac{dQ}{dP} \cdot W]- E[l(W)] = E\left[\frac{dQ}{dP} \cdot W - l(W)\right]$.\\
Now, we would like to interchange the expectation with the supremum. To this end, we use Corollary on page 534 of \citet{rockafellar1968integrals} with $L = M^\theta$, $L^* = L^{\theta^*}$ and $F(x) = l(x)$. Note that $l$ is a lower-semicontinuous proper convex function, and it is easy to verify that $M^\theta$ and $L^{\theta^*}$ are decomposable in their sens, so that all the conditions needed to apply this Corollary are satisfied. We get finally that,
$$R^*(-Q) = E\left[l^*\left(\frac{dQ}{dP}\right)\right] := \alpha(Q).$$
Finally, since $R(X)$ is finite, then the maximum can be taken over $\mathcal{D}^{l^*}$ instead of $\mathcal{D}^{\theta^*}$.
\end{itemize}
\end{proof}
\begin{Definition}
A risk allocation is any minimizer of \eqref{OCE}. When it is uniquely determined, we denote it $RA(X)$.
\end{Definition}
\begin{Theorem}\label{Characterization}
Let $l$ be a loss function. Then, for every $X \in M^\theta$, the set of risk allocations is non empty and bounded. Furthermore, risk allocations are characterized by the following first order condition:
\begin{equation}\label{charactEq}
1 \in E[\partial l(-X - m^*)].
\end{equation}
Moreover, the supremum in \eqref{Representation} is attained for $Z^*$ such that $Z^* \in \partial l(-X -m^*)$ a.s. and $E[Z^*]=1$.
\end{Theorem}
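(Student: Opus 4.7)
My plan is to treat the three assertions in sequence, since each supplies the ingredients for the next.

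\emph{Existence and boundedness.} Set $f(w) := \sum_{i=1}^d w_i + E[l(-X-w)]$. This is convex and, by Fatou's lemma together with the lower bound $l(-X-w) \ge \sum_i(-X_i-w_i) \in L^1$, lower semicontinuous on $\R^d$. The crux is coercivity. Write $\tilde l(x) := l(x) - \sum_i x_i$; then $\tilde l$ is convex, lsc, $\tilde l(0)=0$ and $\tilde l(x)>0$ for $x\neq 0$. A convex-analysis argument --- suppose for contradiction $\tilde l(x_n) \le M$ with $|x_n|\to\infty$, pass to $d_n := x_n/|x_n| \to d$ on the unit sphere, and use $\tilde l(t d_n) \le (t/|x_n|)\tilde l(x_n) \to 0$ combined with lsc to force $\tilde l(td)=0$ for all $t>0$, contradicting $\tilde l>0$ off the origin --- shows $\tilde l(x)\to +\infty$ as $|x|\to\infty$. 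Since $f(w) = -E[\sum_i X_i] + E[\tilde l(-X-w)]$ and $\tilde l\ge 0$, Fatou then yields $f(w)\to +\infty$ as $|w|\to\infty$. Existence of a minimizer follows, and the set of minimizers, being a closed convex sublevel set of a coercive convex function, is bounded.

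\emph{First order condition.} Convexity of $f$ implies that $m^*$ is a minimizer iff $0\in\partial f(m^*)$. Writing $f(w)=\langle \mathbf{1},w\rangle + g(w)$ with $g(w)=E[l(-X-w)]$, I interchange subdifferential and expectation via the Rockafellar integral-functional result already invoked at the end of the previous theorem (with $L=M^\theta$, $L^*=L^{\theta^*}$ and $F=l$), obtaining $\partial g(w)=-E[\partial l(-X-w)]$. Hence $0\in \mathbf{1} - E[\partial l(-X-m^*)]$, i.e. $\mathbf{1}\in E[\partial l(-X-m^*)]$.

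\emph{Dual attainment.} The FOC furnishes a measurable selection $Z^*(\omega)\in \partial l(-X(\omega)-m^*)$ with $E[Z^*]=\mathbf{1}$; monotonicity of $l$ forces $Z^*\ge 0$, so $Z^*$ is the density of some $Q^*\in\mathcal{D}^{l^*}$. The Fenchel equality $l(x)+l^*(y)=x\cdot y$ whenever $y\in\partial l(x)$ yields, pointwise,
\[
l(-X-m^*) + l^*(Z^*) = (-X-m^*)\cdot Z^*.
\]
Taking expectations and using $E[Z^*_i]=1$ converts this into $E[l(-X-m^*)] + \alpha(Q^*) = -E[X\cdot Z^*] - \sum_i m^*_i$, which rearranges to $R(X) = \sum_i m^*_i + E[l(-X-m^*)] = E_{Q^*}[-X] - \alpha(Q^*)$; together with the representation \eqref{Representation} this is dual attainment at $Q^*$.

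The main obstacle I expect is the coercivity step: the only available information is the strict pointwise inequality $l(x)>\sum_i x_i$ with no uniform modulus, and converting this into growth control requires the lsc/convexity recession argument sketched above. A secondary technical point is the subdifferential-expectation interchange in the multivariate Orlicz setting, but this is handled by reapplying the same Rockafellar result that the paper invoked in the proof of the representation just established.
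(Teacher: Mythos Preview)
Your three-step structure mirrors the paper's proof. For existence and boundedness, the paper computes the recession function $f0^+(w)$ directly and shows $f0^+(w)\ge \sum_i w_i + l(-w)>0$ for $w\neq 0$, then cites Rockafellar's Theorem~27.1(d); your coercivity argument for $\tilde l$ is a repackaging of the same idea and is correct. For the first-order condition, a small quibble: the Rockafellar 1968 corollary invoked in the previous theorem handles $\sup/E$ interchange, not $\partial/E$; the paper instead cites Theorem~4.47 in \citet{shapiro} for the subdifferential--expectation interchange, but the substance of your step is the same.

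There is, however, a genuine gap in the dual-attainment step. You assert that $Z^*$ defines some $Q^*\in\mathcal{D}^{l^*}$ and then ``take expectations'' across the Fenchel identity, but neither $X\cdot Z^*\in L^1$ nor $l^*(Z^*)\in L^1$ has been established; without these, the right-hand side $(-X-m^*)\cdot Z^*$ may fail to be integrable and membership in $\mathcal{D}^{l^*}=\{Q:\alpha(Q)<\infty\}$ is unjustified. The paper devotes a paragraph to exactly this point. From the Fenchel identity one has $X\cdot Z^*+l^*(Z^*)=-m^*\cdot Z^*-l(-X-m^*)\in L^1$, and since $l^*\ge 0$ this controls $(X\cdot Z^*)^+$. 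The missing idea is the bound on $(X\cdot Z^*)^-$: apply the subgradient inequality for $Z^*\in\partial l(-X-m^*)$ at the point $2(-X-m^*)$ to obtain
\[
l\bigl(2(-X-m^*)\bigr)\ \ge\ l(-X-m^*)+Z^*\cdot(-X-m^*),
\]
whence $X\cdot Z^*\ge l(-X-m^*)-l\bigl(2(-X-m^*)\bigr)-m^*\cdot Z^*$, and the right-hand side lies in $L^1$ because $X\in M^\theta$. Once $X\cdot Z^*\in L^1$, the Fenchel identity yields $l^*(Z^*)\in L^1$, so $Q^*\in\mathcal{D}^{l^*}$, and your concluding computation then goes through.
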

\begin{proof}
The arguments used in this proof are an extension of the univariate case. To prove that the set of risk allocations is non empty and bounded, it is sufficient to show that the objective function has no direction of recession thanks to Theorem 27.1(d) in \citet{rockafellarConvex}. Let $w \neq 0$ and let $f(w) \vcentcolon= \sum_{i=1}^d w_i + E[l(-X - w)]$. We have,
\begin{align*}
f0^+(w) &= \underset{r \to \infty}{\lim} \frac{f(m + r w) - f(m)}{r}\\
&= \underset{r \to \infty}{\lim} \frac{\sum m_i + r \sum w_i + E[l(-X - m - r w)]  - \sum m_i - E[l(-X - m)]}{r}\\
&= \sum w_i + \underset{r \to \infty}{\lim}\frac{E[l(-X - m - r w)] - E[l(-X - m)]}{r}\\
&= \sum w_i + \underset{r \to \infty}{\lim}\frac{E[l(-X - m - r w)]}{r}.\\
\end{align*}
Now, since $l$ is convex and $l(0) = 0$, for $\lambda > 1$ we have $\frac{1}{\lambda}l(\lambda x) \ge l(x)$. This implies, together with Lebesgue's dominated convergence theorem and lower-semicontinuity of $l$
\begin{align*}
f0^+(w) &\ge \sum w_i + \underset{r \to \infty}{\lim} E\left[ l \left(\frac{-X - m}{r}-w \right)\right]\\
&= \sum w_i + E\left[ \underset{r \to \infty}{\liminf}~ l \left(\frac{-X - m}{r}-w \right) \right]\\
&\ge \sum w_i + l(-w) > 0.
\end{align*}
The last strict inequality is a consequence of the third property of $l$. So we have shown that for every $w \neq 0$, $f0^+(w) > 0$, i.e., $f$ has no direction of recession. We conclude that the set of minimizers is non empty bounded set. Moreover, we have $m^* \in \mathrm{argmin} f$ if and only if $m^*$ satisfies $0 \in \partial f(m^*)$. Using Theorem 4.47 in \citet{shapiro}, we can interchange the partial operator and the expectation sign leading to the following characterization of minimizers:
$$ m^* ~\text{is a minimizer of}~f \Leftrightarrow 1 \in E[\partial l(-X - m^*)].$$
In the following, we prove that the maximum in \eqref{Representation} is attained for $Z^* \in \partial	l(-X-m^*)$ a.s. and $E[Z^*] = 1$. We start by proving the existence of such $Z^*$.
Let $m^*$ be such that $1 \in E[\partial	l(-X-m^*)]$. Note that, for each $z \in \R^d$, if $\nu \in \partial l(z)$, then $\nu$ is nonnegative. In fact, by definition, we have, $l(x) \ge l(z) + \sum_{i=1}^d \nu_i(x_i-z_i)$, $\forall x \in \R^d$. So, if for some $k \in {1,...,d}$, $\nu_k < 0$, then choosing $x = z - n e_k < z$ where $e_k$ is the $k$-th standard unit vector, we get that $-n \nu_k \le l(x) - l(z) \le 0$. By sending $n$ to $+\infty$, we get a contradiction. Therefore, since $1 \in E[l(-X-m^*)]$, there exists a random variable $Z^*$ such that $Z^*\ge 0$ and $Z^* \in \partial l(-X-m^*)$ a.s. and $E[Z^*]=1$.\\
Next, we will show that $Z^* \in \mathcal{D}^{\theta^*}$, that is $E[l^*(Z^*)] < \infty$. Note that since $Z^* \in \partial l(-X-m^*)$, we have that, 
\begin{equation}\label{subgradient}
l^*(Z^*)= Z^*\cdot (-X-m^*)-l(-X-m^*), ~a.s.
\end{equation}
First, we will start by proving that $Z^* \cdot X \in L^1$. Thanks to \eqref{subgradient}, we have $X \cdot Z^* + l^*(Z^*) = -m^* \cdot Z - l(-X-m^*)$. Because $X \in M^\theta$, the right term of the previous equality is in $L^1$. So, this shows that $X \cdot Z^* + l^*(Z^*) \in L^1$. Recall that $l^*(z) \ge 0$ for all $z\in \R^d$ so that we have $(X \cdot Z^*)^+ \in L^1$. It remains to show that $(X\cdot Z^*)^- \in L^1$. Using the convexity of $l$, we have the following inequality:
$$l(2(-X-m^*)) \ge l(-X -m^*) +Z^* \cdot (-X-m^*),~a.s.$$
This in turn implies that $X\cdot Z \ge l(-X-m^*)-l(2(-X-m^*))-Z^*\cdot m^*$. The RHS of this inequality is in $L^1$ as $X \in M^\theta$. Hence, we get that $(Z^* \cdot X)^- \in L^1$. We are now able to say that all the terms in the RHS of \eqref{subgradient} are in $L^1$. We conclude that $l^*(Z^*) \in L^1$. Moreover, we have,
\begin{align*}
E[-X \cdot Z^*] - E[l^*(Z^*)] &= E[-X\cdot Z^*]-E[Z^*\cdot(-X-m^*) - l(-X-m^*)]\\
&=E[-X\cdot Z^* -Z^*\cdot (-X-m^*) + l(-X-m^*)] = E[Z^*\cdot m^* + l(-X-m^*)]\\
&= R(X),
\end{align*}
where we used the optimality of $m^*$ in the last equality. This completes the proof.
\end{proof}
\begin{Example}
The following example with a bidimensional loss function of exponential type as in Example \ref{example}, that is:
$$l(x_1, x_2) = \frac{e^{\lambda_1 x_1}-1}{\lambda_1} + \frac{e^{\lambda_2 x_2}-1}{\lambda_2} +\alpha e^{\lambda_1 x_1 + \lambda_2 x_2},~\text{where}~\lambda_1 > 0,~\lambda_2 > 0,~\alpha \ge 0.$$
If $X \sim \mathcal{N}(0, \Sigma)$ with $\Sigma = \begin{pmatrix}
\sigma_1^2 & \rho \sigma_1 \sigma_2\\
\rho \sigma_1 \sigma_2 & \sigma_2^2
\end{pmatrix}$, then we can solve explicitly the optimal risk allocations in \eqref{charactEq} and to obtain
\begin{equation}\label{optimal}
m_i^* = \left\{
\begin{aligned}
 &\frac{\lambda_i \sigma_i^2}{2}, ~\text{if}~\alpha = 0,\\
 &\frac{\lambda_i \sigma_i^2}{2} - \frac{1}{\lambda_i}\ln(SC_{ij}),~ j \neq i,~\text{if}~\alpha > 0,
\end{aligned}\right.
\end{equation}
\end{Example}
where the term $SC_{ij}, i \neq j,$ is the positive solution to the following second order equation:
$$ \alpha \lambda_j \exp(\rho \sigma_i \sigma_j \lambda_i \lambda_j) X^2 + \left(1 + \alpha(\lambda_i - \lambda_j)\exp(\rho \sigma_i \sigma_j \lambda_i \lambda_j)\right)X - 1 = 0$$
The risk measure could also be derived in explicit form:
\begin{equation}\label{riskMeasure}
R(X) = m_*^1 + m_*^2 + \frac{2 - \alpha}{\lambda_1}(SC_{12} - 1)=m_*^1 + m_*^2 + \frac{2 - \alpha}{\lambda_2}(SC_{21} - 1) ,
\end{equation}
\begin{Remark}\label{remark}
The formula obtained in \eqref{optimal} is close to the one in Example 3.12 in \citet{armenti}. It shows that the optimal allocations are disentangled into two components: the first one is an individual contribution which takes the form of the entropic risk measure of $X_i$  and the second one is a systemic contribution which involves correlations between the two components of the system. This formula shows also an interesting feature: the partial differential of SRC with respect to $\rho$ is always positive. This can be interpreted in the following way: the more correlated the system is, the riskier is. Note that this is not true in general and depends on the loss function $l$ used.
\end{Remark}
\begin{Corollary}
Let $l$ a strictly convex loss function. Then,
$$RA(X+r) = RA(X)-\sum_{i=1}^d r_i,~ \text{for every}~ X \in M^\theta ~\text{and}~ r\in \R^d.$$
If $l$ is additionally positive homogeneous, then
$$RA(\lambda X) = \lambda RA(X), ~ \text{for every}~ X \in M^\theta ~\text{and}~ \lambda > 0.$$
\end{Corollary}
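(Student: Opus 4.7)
The plan is to use strict convexity of $l$ to promote the existence result of Theorem~\ref{Characterization} to uniqueness of the minimizer, after which both identities follow from elementary changes of variable in the optimization problem defining $R$. The only genuine technical input is the preservation of strict convexity; everything else is a routine rewriting of the objective function.

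\textbf{Uniqueness of the minimizer.} For each $\omega$, the map $w \mapsto l(-X(\omega)-w)$ is the composition of the strictly convex function $l$ with the affine bijection $w \mapsto -X(\omega)-w$, hence strictly convex on $\R^d$. Taking expectation (finite on all of $\R^d$ because $X \in M^\theta$) preserves strict convexity, so
$$f_X(w) \vcentcolon= \sum_{i=1}^d w_i + E[l(-X-w)]$$
is strictly convex in $w$. Combined with the no-recession property already established in Theorem~\ref{Characterization}, this yields a unique minimizer, which is by definition $RA(X)$.

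\textbf{Translation.} The substitution $w = w' - r$ in $f_{X+r}$ gives $f_{X+r}(w) = f_X(w') - \sum_{i=1}^d r_i$, so the two objectives differ by a constant and their unique minimizers are related by the shift $w = w' - r$. Hence $RA(X+r) = RA(X) - r$ coordinatewise; summing the components recovers the scalar cash-invariance $R(X+r) = R(X) - \sum_i r_i$ already proved for $R$.

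\textbf{Positive homogeneity.} When $l$ is additionally positively homogeneous of degree one, the identity $l(-\lambda X - w) = \lambda\, l(-X - w/\lambda)$ combined with the substitution $w = \lambda w'$ yields $f_{\lambda X}(\lambda w') = \lambda\, f_X(w')$ for every $\lambda > 0$. By uniqueness, $\lambda\, RA(X)$ is the unique minimizer of $f_{\lambda X}$, i.e., $RA(\lambda X) = \lambda\, RA(X)$. No further obstacle is expected: existence and boundedness of the argmin set are handed to us by the preceding theorem, and the strict-convexity-preservation step above is what allows us to identify the minimizer precisely under the two transformations.
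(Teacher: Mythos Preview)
Your proposal is correct and follows essentially the same change-of-variable approach as the paper. The only stylistic difference is that, for the translation identity, the paper invokes the first-order characterization $1\in E[\partial l(-X-r-m)]$ from Theorem~\ref{Characterization} and then substitutes $w=r+m$, whereas you perform the substitution directly in the objective $f_{X+r}$; the positive-homogeneity argument is identical in both.
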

\begin{proof}
Let $X \in M^\theta$ and $r \in \R^d$. $m\vcentcolon RA(X+r)$ is the unique solution of $1 \in E[\partial l(-X - r - m)]$. Setting $w = r + m$, we obtain that $w$ satisfies $1 \in E[\partial l(-X - w)]$, which by uniqueness implies that $w = RA(X)$, that is $RA(X+r) = RA(X) - r$. Let $\lambda > 0$, we have,
\begin{align*}
RA(\lambda X) &= \underset{w}{\argmin}\left\{\sum w_i + E[l(-\lambda X - w)]\right\}\\
&=\underset{w}{\argmin}\left\{\sum_{i=1}^d w_i + \lambda E\left[l(-X - \frac{w}{\lambda})\right]\right\}\\
&= \underset{w}{\argmin}\left\{\sum_{i=1}^d \frac{w_i}{\lambda} + \lambda E\left[l(-X - \frac{w}{\lambda})\right]\right\}\\
&= \lambda~\underset{w}{\argmin}\left\{\sum_{i=1}^d w_i + E[l(-X - w)]\right\} = \lambda RA(X).
\end{align*}
\end{proof}
Now, we focus on the study of the sensitivity of our multivariate risk measure. We first give the definition of the marginal risk contribution of $Y \in M^\theta$ to $X\in M^\theta$.
\begin{Definition}
For $X, Y \in M^\theta$, we define the marginal risk contribution of $Y$ to $X$ as the sensitivity of the risk associated to $X$ when an impact $Y$ is applied as
\begin{equation}
R(X, Y) \vcentcolon = \underset{\epsilon \searrow 0}{\limsup} \frac{R(X +\epsilon Y) -R(X)}{\epsilon}.
\end{equation}
If $R(X + \epsilon Y)$ admits a unique risk allocation $RA(X +\epsilon Y)$ for small enough $\epsilon \ge 0$, then we define the risk allocation marginals of $X$ with respect to the impact of $Y$ as:
\begin{equation}
RA_i(X;Y) \vcentcolon \underset{\epsilon \searrow 0}{\limsup} \frac{RA_i(X + \epsilon Y) - RA_i(X)}{\epsilon},~i=1,...,d.
\end{equation}
\end{Definition}
\begin{Theorem}\label{sensitivity}
Let $X, Y \in M^\theta$ and assume that $l$ is differentiable. Then,
\begin{equation}\label{sensitivity1}
R(X, Y) = -E[Y\cdot \nabla l(-X -m^*)] = -\sum_{i=1}^d E_{Q^n_*}[Y^n],
\end{equation}
where $m^*$ is such that, $E[\nabla l(-X-m^*)]=1$, i.e. an infinimum for \eqref{OCE} and $\frac{d Q_*}{dP} \vcentcolon= \nabla l(-X-m^*)$.\\
If furthermore, $l$ is twice differentiable such that we can interchange the differentiation and expectation of $m \mapsto E[\nabla l(-X-m)]$ and $M \vcentcolon = E[\nabla^2 l(-X-m^*)]$ is invertible, then we have,
\begin{itemize}
\item There exists a unique $m_\epsilon$ optimum of $R(X+\epsilon Y)$ for small enough $\epsilon \ge 0$.
\item As a function of $\epsilon$, $m_\epsilon$ is differentiable and we have 
\begin{equation}\label{sensitivity2}
RA(X,Y) = M^{-1} V,~V \vcentcolon = -E[\nabla^2 l(-X-m^*)Y].
\end{equation}
\end{itemize}
\end{Theorem}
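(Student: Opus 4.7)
The strategy for the first identity is an envelope/Danskin-type argument applied to the parametrized infimum $F(\epsilon,w) := \sum_{i=1}^d w_i + E[l(-X-\epsilon Y - w)]$, so that $R(X+\epsilon Y) = \inf_w F(\epsilon,w)$. Since $R$ is convex by the previous theorem, the map $\epsilon \mapsto R(X+\epsilon Y)$ is convex, so its right-derivative at $0$ exists and coincides with the $\limsup$ in the definition of $R(X,Y)$. By Theorem \ref{Characterization} there exists a minimizer $m^*$ at $\epsilon = 0$ satisfying $E[\nabla l(-X-m^*)] = 1$ (with differentiability of $l$ the subdifferential reduces to the gradient). A direct comparison $R(X+\epsilon Y) \le F(\epsilon, m^*)$ combined with convexity in $\epsilon$ and the optimality equality at $\epsilon = 0$ gives the upper and lower bounds
\[
\frac{R(X+\epsilon Y) - R(X)}{\epsilon} \;\longrightarrow\; \partial_\epsilon F(0,m^*) \;=\; -E[Y\cdot \nabla l(-X-m^*)],
\]
where the interchange of derivative and expectation is justified by the convexity of $l$ and a dominated-convergence bound using $X,Y\in M^\theta$. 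Rewriting $dQ_*/dP = \nabla l(-X-m^*)$ (which is nonnegative and has expectation $\mathbf{1}$ by the first-order condition) gives the second equality $-\sum_i E_{Q_*^i}[Y_i]$.

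For the second part the plan is to apply the implicit function theorem to the first-order condition
\[
G(\epsilon,m) \;:=\; \mathbf{1} - E[\nabla l(-X-\epsilon Y - m)] \;=\; 0,
\]
viewed as a map $\R\times\R^d \to \R^d$ near $(0,m^*)$. Using the hypothesis that differentiation and expectation may be interchanged, compute $\partial_m G(0,m^*) = E[\nabla^2 l(-X-m^*)] = M$, which is invertible by assumption, and $\partial_\epsilon G(0,m^*) = E[\nabla^2 l(-X-m^*)\,Y] = -V$. The IFT then yields, for $\epsilon$ in a neighbourhood of $0$, a unique $C^1$ curve $\epsilon\mapsto m_\epsilon$ with $m_0 = m^*$ solving the first-order equation; strict convexity of $F(\epsilon,\cdot)$ near $m^*$ (since its Hessian $M$ is positive definite at $m^*$, inheriting positivity from invertibility plus nonnegativity of $\nabla^2 l$) promotes this local solution to the unique minimizer $RA(X+\epsilon Y)$. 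Implicit differentiation gives
\[
\frac{dm_\epsilon}{d\epsilon}\bigg|_{\epsilon=0} \;=\; -\bigl(\partial_m G(0,m^*)\bigr)^{-1}\partial_\epsilon G(0,m^*) \;=\; -M^{-1}(-V) \;=\; M^{-1}V,
\]
and the $\limsup$ in the definition of $RA(X;Y)$ becomes a genuine limit equal to $M^{-1}V$ coordinate by coordinate.

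The main obstacle will be the envelope/convergence step: justifying that the difference quotient for $R(X+\epsilon Y)$ actually has a limit (not merely $\limsup$) and that differentiation passes under the expectation. The upper bound $R(X+\epsilon Y)-R(X) \le F(\epsilon,m^*)-F(0,m^*)$ is immediate, but the matching lower bound requires controlling the minimizer $m_\epsilon$ near $m^*$; this is exactly where the boundedness of risk allocations from Theorem \ref{Characterization} (no direction of recession, uniformly in small $\epsilon$) together with convexity of $l$ is needed. Once this is done, strict convexity near $m^*$ under the invertibility of $M$ provides both local uniqueness of $m_\epsilon$ and the differentiability required to conclude.
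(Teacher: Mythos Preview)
Your outline is correct, and the second part---applying the implicit function theorem to the first-order condition $G(\epsilon,m)=\mathbf{1}-E[\nabla l(-X-\epsilon Y-m)]=0$---is exactly what the paper does, though the paper packages it as a citation of Fiacco's sensitivity theorem for parametric nonlinear programs, which is the IFT argument you spell out by hand.

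The genuine difference is in the lower bound for the first identity. Your upper bound $R(X+\epsilon Y)\le F(\epsilon,m^*)$ and the dominated-convergence passage to the limit match the paper (which uses a Fatou-type bound via convexity and monotonicity of $l$ together with $l(x)\ge\sum x_i$). For the matching lower bound, however, you propose a Danskin-type argument that requires controlling the minimizers $m_\epsilon$ as $\epsilon\downarrow 0$, and you rightly flag this as ``the main obstacle.'' The paper sidesteps this entirely by using the dual representation already proved in Theorem~\ref{Characterization}: since $R(X)=E_{Q^*}[-X]-\alpha(Q^*)$ at the dual optimizer $dQ^*/dP=\nabla l(-X-m^*)$, and since $R(X+\epsilon Y)\ge E_{Q^*}[-(X+\epsilon Y)]-\alpha(Q^*)$ holds for any feasible $Q$, one obtains
\[
R(X+\epsilon Y)-R(X)\;\ge\;-\epsilon\,E\bigl[Y\cdot\nabla l(-X-m^*)\bigr]
\]
in one line, with no need to track $m_\epsilon$ at all. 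Your Danskin route via boundedness of risk allocations would also go through, but the duality shortcut is both shorter and reuses structure that is already in hand; it is worth knowing as the standard device for the lower half of envelope arguments when a saddle or dual representation is available.
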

\begin{proof}
Take $X, Y \in M^\theta$ and let $m^*$ be an infinimum for $R(X)$. We have $R(X) = \sum_{i=1}^d m_i^* + E[l(-X-m^*)]$ and $E[\nabla l(-X-m^*)]=1$. By the definition of $R(X + \epsilon Y)$, we have
\begin{align*}
\frac{R(X + \epsilon Y) - R(X)}{\epsilon} &\le \frac{\sum m_i^* + E[l(-X - \epsilon Y - m^*)] - \sum m_i^* - E[l(-X-m^*)]}{\epsilon}\\
&=E\left[ \frac{l(-X - \epsilon Y - m^*) - l(-X-m^*)}{\epsilon} \right].
\end{align*}
Using the convexity, monotonocity of $l$ and the fact that $-l(x) \ge -\sum x_i, x \in\R^d$, for $0 < \epsilon < \frac{1}{2}$, we get that,
\begin{align*}
\frac{l(-X - \epsilon Y - m^*) - l(-X-m^*)}{\epsilon} &\le \frac{l(-X - m^* - (1-\epsilon)Y) - l(-X-m^*)}{1-\epsilon}\\
&\le \frac{l(|X| +|m^*| + (1-\epsilon)|Y|) - l(-X-m^*)}{1-\epsilon}\\
&\le 2 \left(l(|X| +|m^*| + |Y|) + \sum_{i=1}^d X_i + m_i\right).
\end{align*}
Since $X$ and $Y$ are in $M^\theta$, the last term is bounded from above by a random variable which is in $L^1$. Therefore, using Fatou's lemma, we obtain that,
$$\underset{\epsilon \searrow 0}{\limsup}~\frac{R(X + \epsilon Y) - R(X)}{\epsilon} \le E[-Y \cdot \nabla l(-X-m^*)].$$
Now, using the representation given in Theorem \ref{Characterization}  $R(X+\epsilon Y)=\underset{Q \in \mathcal{D}^{l^*}}{\max}~E_Q[-(X +\epsilon Y)]-E[l^*(\frac{dQ}{dP})]$,  and that $R(X) = E_{Q^*}[-X]-E[l^*(\frac{dQ^*}{dP})]$ with $\frac{d Q^*}{d P} = \nabla l(-X -m^*)$, we get, 
$$R(X+\epsilon Y) \ge E\left[-(X + \epsilon Y)\cdot\frac{dQ^*}{dP}-l^*\left(\frac{dQ^*}{dP}\right)\right]  =  R(X)- \epsilon E[Y\cdot \nabla l(-X -m^*)],$$
Consequently, the other inequality follows:
$$ \underset{\epsilon \searrow 0}{\limsup} \frac{R(X + \epsilon Y) - R(X)}{\epsilon} \ge -E[Y \cdot \nabla l(-X-m^*)].$$
Second assertion is a direct application of Theorem 6 pp 34 in \citet{fiacco1990nonlinear}.
\end{proof}
In the following Corollary, we explicit the impact of an independent exogenous shock in the case $X$ and $Y$ are independent.
\begin{Corollary}
If $X$ and $Y$ are independent, then under assumptions of Theorem \ref{sensitivity}, we have,
\begin{equation}\label{sensitivity3}
R(X, Y) = -\sum_{i=1}^d E[Y_i],~~~RA(X, Y) = -E[Y].
\end{equation}
\end{Corollary}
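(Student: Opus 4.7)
The plan is to apply directly the two representation formulas \eqref{sensitivity1} and \eqref{sensitivity2} from Theorem \ref{sensitivity}, using only the fact that under independence, any measurable function of $X$ is independent of any measurable function of $Y$, so expectations of products factor into products of expectations. No further analytic machinery should be required.

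For the first identity, I would start from $R(X,Y) = -E[Y \cdot \nabla l(-X - m^*)]$ and expand the inner product componentwise: $R(X,Y) = -\sum_{i=1}^d E[Y_i (\nabla l(-X - m^*))_i]$. Since $m^*$ is deterministic, each $(\nabla l(-X - m^*))_i$ is a measurable function of $X$ alone, hence independent of $Y_i$. Factorizing gives $R(X,Y) = -\sum_{i=1}^d E[Y_i]\, E[(\nabla l(-X - m^*))_i]$. Then I invoke the first-order optimality condition characterizing $m^*$, namely $E[\nabla l(-X - m^*)] = (1,\dots,1)$, so each factor $E[(\nabla l(-X - m^*))_i]$ equals $1$ and we obtain $R(X,Y) = -\sum_{i=1}^d E[Y_i]$.

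For the second identity, I would apply the same factorization to $V = -E[\nabla^2 l(-X - m^*)\, Y]$. Since the Hessian $\nabla^2 l(-X - m^*)$ is a matrix-valued function of $X$ only, independence of $X$ and $Y$ yields, entrywise, $V_i = -\sum_{j=1}^d E[(\nabla^2 l(-X - m^*))_{ij}]\, E[Y_j]$, that is $V = -M\, E[Y]$ with $M = E[\nabla^2 l(-X - m^*)]$. Since $M$ is assumed invertible in Theorem \ref{sensitivity}, the formula $RA(X,Y) = M^{-1} V$ immediately reduces to $RA(X,Y) = -E[Y]$.

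There is no real obstacle here; the only point deserving mention is the factorization of a matrix--vector expectation, which is immediate entrywise but should be written out once to make the independence argument transparent. Everything else is a direct application of the already-established Theorem \ref{sensitivity} together with the optimality condition \eqref{charactEq}.
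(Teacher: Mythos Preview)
Your proposal is correct and is precisely the argument the paper has in mind: the Corollary is stated without proof because it follows immediately from \eqref{sensitivity1}, \eqref{sensitivity2} and the first-order condition $E[\nabla l(-X-m^*)]=1$ by factoring the expectations under independence, exactly as you do.
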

\begin{Remark}\
\begin{enumerate}
\item The equations in \eqref{sensitivity1} and  \eqref{sensitivity2} are very interesting and show the relevance of the dual optimizer $Q_*$. More precisely, \eqref{sensitivity1} shows that the marginal risk contribution can be quantified thanks to the optimal probability $Q_*$.
\item If only the value of portfolio $i$ changes by a cash amount, that is $Y^i = c^i$ and $Y^j = 0$ for $j \neq i$, then the marginal risk contribution $R(X, Y) = -c^i$ is exactly covered by the marginal risk allocation $RA_i(X, Y) = -c^i$ of portfolio $i$, whereas marginal risk allocations of other portfolios remain unchanged, i.e. $RA_j(X, Y) = 0$ for $j \neq i$. This property of full responsibility for one's own changes in financial position is known as causal responsibility (see \citet{brunnermeier2019measuring}). In general, this is no longer true if $Y^i$ is a random variable, but in the particular case when $Y^i$ is independent of $X$, this property remains true as suggested by equation \eqref{sensitivity3}.
\item Equation \eqref{sensitivity3} shows an interesting feature: Assume that two institutions $i \neq j$ change their positions in opposite direction, that is $Y^i = -Y^j$, then the marginal risk contribution is zero, as if the portfolios compensate each other and a risk sharing mechanism take place.
\end{enumerate}
\end{Remark}
\begin{Example}
In this example, we illustrate the impact of an exogenous shock that may depend on $X$. More specifically, we consider a system with two portfolios $X=(X_1, X_2)$, an exogenous shock $Y=(Y_1, 0)$ impacting the first component only and a loss function of exponential type as in \eqref{exponential}: 
$$l(x_1, x_2)=\frac{e^{\lambda_1 x_1}-1}{\lambda_1} + \frac{e^{\lambda_2 x_2}-1}{\lambda_2} + \alpha e^{\lambda_1 x_1 + \lambda_2 x_2}.$$
As per Theorem \ref{Characterization}, there exists a unique risk allocations $m^*$. To alleviate the expressions, we denote the following:
\begin{equation*}
\left\{
\begin{aligned}
&C_{X_1} \vcentcolon = E[e^{\lambda_1(-X_1-m_1^*)}],~C_{X_2} \vcentcolon = E[e^{\lambda_2(-X_2-m_2^*)}],\\
&C_X \vcentcolon= E[e^{\lambda_1(-X_1-m_1^*) +\lambda_2(-X_2-m_2^*)}],\\
&C_{X_1Y} \vcentcolon = E[Y_1e^{\lambda_1(-X_1-m_1^*)}],~C_{X_2Y} \vcentcolon = E[Y_1e^{\lambda_2(-X_2-m_2^*)}],\\
&C_{XY} \vcentcolon = E[Y_1e^{\lambda_1(-X_1-m_1^*) +\lambda_2(-X_2-m_2^*)}].
\end{aligned}\right.
\end{equation*}
The matrix $M$ and vector $V$ in Theorem \ref{sensitivity} can be expressed thanks to the quantities above after some simple but lengthy computations (omitted here):
$$ M = \begin{pmatrix}
\lambda_1C_{X_1} + \alpha \lambda_1^2 C_X & \alpha \lambda_1\lambda_2 C_X\\
\alpha \lambda_1\lambda_2 C_X & \lambda_2C_{X_2} + \alpha \lambda_2^2 C_X
\end{pmatrix}, ~V = \begin{pmatrix}
-\lambda_1 C_{X_1Y} - \alpha \lambda_1^2 C_{XY} \\
-\alpha \lambda_1 \lambda_2 C_{XY}
\end{pmatrix}.$$
The risk contribution marginal and risk allocations marginals follows:
\begin{align}
R(X, Y) &= -C_{X_1Y} - \alpha \lambda_1 C_{XY},\\
RA(X, Y) &= \text{Common Factor}\times \begin{pmatrix}
-C_{X_2}C_{X_1Y}-\alpha (\lambda_1 C_{X_2}C_{XY}+\lambda_2C_XC_{X_1Y}))\\
 -\alpha(\lambda_1 C_{X_1}C_{XY} - \lambda_1C_XC_{X_1Y}).
\end{pmatrix}
\end{align}
We notice the following:
\begin{itemize}
\item $R(X, Y)$ is disentangled into two components. The first one is the contribution of the first component in the risk contribution marginal and the second is a systemic contribution that is proportional to $\alpha$. This same remark holds for $RA_1(X, Y)$.
\item The asymmetry of the shock on $X_1$ can be seen in the systemic contribution in $RA_1(X, Y)$ and $RA_2(X, Y)$. Indeed, we notice that both components are impacted by the shock and this is reflected by the term $-\alpha \lambda_1 C_{X_2}C_XY$ for the first component and $-\alpha\lambda_1 C_{X_1} C_XY$ for the second. However, there is a correction term proportional to $\lambda_2$ that is subtracted from the first component whereas another correction term proportional to $\lambda_1$ is added to the second component.
\item In the case $\alpha =0$, i.e. without a systemic component, the risk marginal of the second portfolio is zero. This something we would expect as we applied a shock only on the first component. In other words, the first component takes full responsibility in this case.
\end{itemize}
\end{Example}
In the rest of the paper, for every $X \in M^\theta$, we will assume the following:
\begin{enumerate}[label=($\mathcal{A}$\textsubscript{l})]
\item
\begin{enumerate}[label = \roman*.]
\item \label{AlOCE1} For every $m_0 \in \R^d$, $m \mapsto l(-X-m)$ is differentiable at $m_0$ a.s.;
\item \label{AlOCE2} $m \mapsto E[l(-X-m)]$ is strictly convex.
\end{enumerate} 
\label{AlOCE}
\end{enumerate}
Under assumption \ref{AlOCE}, there exists a unique risk allocation $m^*$ that is characterized through the following equation:
$$1 = E[\nabla l(-X-m^*)].$$
\section{Computational aspects}\label{ComputationalAspects}
In this section, we develop numerical schemes to compute the optimal risk allocations $m^*$ and $R(X)$ using stochastic algorithms (SA).
This is because the optimal allocations are solutions of a convex optimization problem whose objective function can be expressed as an expectation. Stochastic algorithms are generally used to find zeros of a certain function $h$ that is unknown but could be approximated using some estimate. More specifically, SA algorithms take the following form: $Z_{n+1} = Z_n \pm \gamma_n Y_n$, where $Y_n$ is a noisy estimate of $h(Z_n)$ and $(\gamma_n)$ is a step sequence decreasing toward $0$. This algorithm is known as Robbins-Monro algorithm (RM). For an overview of SA algorithms, we refer to \citet{duflo}. However, in order to be able to use classical convergence results of SA, we need a sub-linear growth over the function $h$ (see for example condition (8) of Theorem 2.2 in \citet{pages}), which in our case, considerably limits the choice of loss functions. To circumvent this condition, we will use a \q{constrained} variant where we force the iterations of the (RM) algorithm to remain in a certain compact $K$ set that contains the optimal allocations. One could also use the well-known  projection \q{\`a la Chen} algorithm based on reinitializations of the algorithm and taking larger compact sets each time the iteration goes out of the compact set (cf. \citet{chen1986stochastic}). For the sake of simplicity, we will use the classical \q{constrained} version with a fixed compact set $K$ as it has the same asymptotic behaviour as the one with projection \q{\`a la Chen}. In \citet{armenti}, numerical schemes were developed to find optimal allocations for multivariate shortfall risk measures. They first estimated the different expectations using Monte Carlo/Fourier methods and then  a root finding algorithm was used to find the optimum. Although this method shows good results of convergence and is quite fast, it has several drawbacks: It is sensitive to the starting point of the root finding algorithm and one has no control over the error of estimation. With SA, there is one major advantage over the former method: One could derive Central Limit Theorems (CLT) for the estimation and therefore obtain confidence intervals could be obtained for the estimators.\\
We will study the behaviour of SA algorithms for the different loss functions in example \ref{example}. Recall that, for $X \in M^\theta$ and under the assumption \ref{AlOCE}, there exists a unique risk allocation $m^*$ solution of $1 = E[\nabla l(-X-m^*)]$. We fix $K$ a hyperrectangle such that $m^* \in \mathrm{int}(K)$ and we define for $X\in M^\theta$ and $m \in \R^d$:
\begin{equation}
\left\{
\begin{aligned}
H_1(X, m) &\vcentcolon=  \nabla l(-X-m) - 1,\\
~h_1(m) &\vcentcolon= E[H_1(X,m)],\\
\sigma^2(m) &\vcentcolon= E[||H_1(X, m)-h_1(m)||^2],\\
m^{2 + p}(m) &\vcentcolon= E[||H_1(X, m)-h_1(m)||^{2 +p}||,~p > 0,\\
\Sigma(m) &\vcentcolon= E[(H_1(X, m)-h_1(m))(H_1(X,m)-h_1(m))^\intercal].
\end{aligned}\right.
\end{equation}
We introduce the following set of assumptions:
\begin{center}
\begin{minipage}{11cm}
\begin{enumerate}[label=($\mathcal{A}$\textsubscript{a.s.})]
\item 
\begin{enumerate}[label=\roman*.]
\label{Aas}
\item\label{Aas1} $\sum_{n \ge 0} \gamma_n = + \infty~\text{and}~\sum_{n \ge 0} \gamma_n^2 < \infty$;
\item\label{Aas2} $h_1$ is continuous on $K$;
\item\label{Aas3} $\underset{m \in K}{\sup}~\sigma^2(m) < \infty$.
\end{enumerate}
\end{enumerate}
\end{minipage}
\end{center}
\begin{Theorem}\label{as}
Let $(X_n)$ a sequence of random variables having the same law as $X \in M^\theta$ and define the sequence $(m_n)$ as follows:
\begin{equation}\label{SA}
m_{n+1} = \Pi_K\left[m_n + \gamma_n H_1(X_{n+1},m_n)\right],~ m_0 \in L^0,
\end{equation}
where $\Pi_K$ is the projection into $K$. Under \ref{AlOCE} and \ref{Aas}, we have, $m_n \to m^*$ a.s. as $n \to \infty$.
\end{Theorem}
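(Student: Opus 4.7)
My plan is to recognize the iteration \eqref{SA} as a projected stochastic gradient descent on the strictly convex, continuously differentiable function $f(m) := \sum_{i=1}^d m_i + E[l(-X-m)]$, so that $\nabla f(m) = 1 - E[\nabla l(-X-m)] = -h_1(m)$ and the unique minimizer of $f$ is exactly the unique risk allocation $m^*$, which by construction lies in $\interior(K)$. Setting $\xi_{n+1} := H_1(X_{n+1}, m_n) - h_1(m_n)$ and $\mathcal{F}_n := \sigma(X_1, \dots, X_n)$, the scheme rewrites
\[
m_{n+1} = \Pi_K\bigl[m_n - \gamma_n \nabla f(m_n) + \gamma_n \xi_{n+1}\bigr],
\]
with $(\xi_n)$ a martingale difference sequence for $(\mathcal{F}_n)$.

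First, I would perform a Lyapunov analysis with $V_n := \tfrac12 \|m_n - m^*\|^2$. Since $m^* \in K$ and $\Pi_K$ is $1$-Lipschitz, I can bound $V_{n+1}$ by expanding $\tfrac12 \|m_n - m^* - \gamma_n \nabla f(m_n) + \gamma_n \xi_{n+1}\|^2$, take conditional expectation to eliminate the cross term involving $\xi_{n+1}$, and invoke the convexity inequality $\langle m_n - m^*, \nabla f(m_n)\rangle \ge f(m_n) - f(m^*) \ge 0$ together with the uniform bound $\sup_{m \in K} E[\|H_1(X, m)\|^2] < \infty$, which follows from \ref{Aas} and the continuity of $h_1$ on the compact $K$. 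This yields
\[
E[V_{n+1} \mid \mathcal{F}_n] \le V_n - \gamma_n \bigl(f(m_n) - f(m^*)\bigr) + C \gamma_n^2.
\]

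Second, I would apply the Robbins-Siegmund almost-sure supermartingale convergence lemma, using $\sum_n \gamma_n^2 < \infty$, to obtain simultaneously that $V_n$ converges almost surely to some finite limit $V_\infty$ and that $\sum_n \gamma_n \bigl(f(m_n) - f(m^*)\bigr) < \infty$ almost surely. Combined with $\sum_n \gamma_n = \infty$ and nonnegativity of $f(m_n) - f(m^*)$, this forces $\liminf_n f(m_n) = f(m^*)$ a.s.

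The main obstacle I foresee is upgrading this $\liminf$ statement to genuine a.s. convergence of $(m_n)$, since Robbins-Siegmund by itself only provides convergence of $V_n$ to \emph{some} random limit. To close the gap, I plan to exploit the compactness of $K$: along any $\omega$ in the full-measure event where the above conclusions hold, extract a subsequence $m_{n_k}(\omega) \to \bar m \in K$, use continuity of $f$ to get $f(\bar m) = f(m^*)$, and then invoke strict convexity of $f$ (guaranteed by \ref{AlOCE}) to deduce $\bar m = m^*$. Consequently $V_{n_k}(\omega) \to 0$, and because $V_n(\omega)$ converges along the full sequence, we must have $V_\infty(\omega) = 0$, i.e.\ $m_n \to m^*$ almost surely.
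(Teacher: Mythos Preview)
Your argument is correct and self-contained. The paper, however, does not proceed via a Lyapunov/Robbins--Siegmund analysis. Instead, it appeals to the ODE method: it observes (referring to an earlier paper of the authors) that the projected ODE associated with the recursion \eqref{SA} has $m^*$ as its unique limit point, and then invokes Theorem~2.1 in Kushner--Yin, checking that their standing assumptions (A2.1)--(A2.5) follow from \ref{Aas}\ref{Aas2} and \ref{Aas}\ref{Aas3}.

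The two routes are genuinely different. The paper's proof is essentially a black-box verification of hypotheses in a general stochastic-approximation convergence theorem; it is short but relies on external machinery (the ODE method and the associated stability analysis). Your proof exploits the specific convex structure of the problem: the fact that $-h_1=\nabla f$ for a strictly convex $f$ lets you run the classical descent inequality $E[V_{n+1}\mid\mathcal F_n]\le V_n-\gamma_n(f(m_n)-f(m^*))+C\gamma_n^2$, after which Robbins--Siegmund, compactness of $K$, and strict convexity close the argument. This is more elementary and transparent, and makes no use of the projected-ODE framework; on the other hand, the ODE route would adapt more readily to non-gradient fields $h_1$ or to situations where one only knows qualitative stability of the mean dynamics rather than an explicit Lyapunov function.
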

\begin{proof}
Following the same arguments of Theorem 2.4 in \citet{hal2}, the only limit point of the projected ODE associated to the algorithm in \eqref{SA} is $m^*$. Thus, we can use Theorem 2.1 in \citet{kushner} that argues that $m_n$ will converge to the limit point $m^*$ if we can verify their assumptions (A2.1)-(A2.5). Indeed, (A2.1) is guaranteed thanks to the assumption \ref{Aas}-\ref{Aas3}. The other assumptions are verified thanks to \ref{Aas}-\ref{Aas2}.
\end{proof}
Once we have an estimator of $m^*$, it comes the question of estimating the multivariate OCE $R(X) = \sum_{i=1}^d m_i^* +E[l(-X - m^*)]$. A naive way consists in estimating $R(X)$ in a two steps procedure:
\begin{itemize}
\item Step 1: Use the estimate $m_n$ from \eqref{SA} to have a good approximation of $m^*$.
\item Step 2: Use another sample of $X$ to approximate $R(X)$ using Monte Carlo:
\begin{equation}\label{MC}
R(X) \approx \sum_{i=1}^d m^* + \frac{1}{n} \sum_{k=1}^n l(-X_k -m^*).
\end{equation}
\end{itemize}
A natural way to avoid this two steps procedure is to use a companion procedure (CP) of the algorithm \eqref{SA} and to replace the quantity $m^*$ in \eqref{MC} by its estimate at step $k-1$, that is,
$$R_n = \frac{1}{n}\sum_{k=1}^n \left((\sum_{i=1}^d m_{k-1}^i) + l(-X_k - m_{k-1})\right).$$
Note that $R_n$ is a sequence of empirical means of non i.i.d. random variables that can be written also as:
\begin{equation}
R_{n+1} = R_{n} - \frac{1}{n + 1}H_2(X_{n+1}, R_{n}, m_n),~n \ge 0,~R_0 \in L^0,
\end{equation}
where $$H_2(x, R, m) \vcentcolon= R - F(x, m) \vcentcolon= R - \left(\sum_{i=1}^d m^i + l(-x -m)\right).$$
We are now facing two procedures with different time steps: one for the estimation of $m^*$ and the other one for the estimation of $R(X)$. In the following theorem, we prove the consistency of the second procedure using the same time step as the first one $(\gamma_n)$, namely,
\begin{equation}\label{rho_n}
R_{n+1} = R_n -\gamma_n H_2(X_{n+1}, R_n, m_n),~n \ge 0,~R_0 \in L^0.
\end{equation}
To this purpose we need the following assumption:
\begin{center}
\begin{minipage}{17cm}
\begin{enumerate}[label=($\mathcal{A}$\textsubscript{CP})]
\item 
\label{Aas'}
$\forall m \in K, ~l(-X-m) \in L^2$ and $m \to E[|l(-X-m)|^2]$ is bounded around $m^*$.
\end{enumerate}
\end{minipage}
\end{center}
\begin{Theorem}
\label{Rn}
Assume that assumptions \ref{AlOCE}, \ref{Aas} and \ref{Aas'} hold and let $(m_n)$ be given by \eqref{SA} and $(R_n)$ by \eqref{rho_n}. Then $R_n \to R(X)$ a.s.
\end{Theorem}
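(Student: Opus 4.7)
The plan is to treat $(R_n)$ as a companion procedure associated with the Robbins-Monro scheme \eqref{SA}, in the spirit of \citet{pages}. First rewrite \eqref{rho_n} as
$$R_{n+1} = (1-\gamma_n) R_n + \gamma_n F(X_{n+1}, m_n), \qquad F(x,m) := \sum_{i=1}^d m^i + l(-x-m),$$
and let $\phi(m) := E[F(X,m)] = \sum_{i=1}^d m^i + E[l(-X-m)]$, so that $\phi(m^*) = R(X)$, and $\phi$ is continuous as a real-valued convex function on $\R^d$. Set $\mathcal{F}_n := \sigma(R_0, m_0, X_1, \ldots, X_n)$, so that $m_n$ is $\mathcal{F}_n$-measurable and $X_{n+1}$ is independent of $\mathcal{F}_n$. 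Decompose $F(X_{n+1}, m_n) = \phi(m_n) + \xi_{n+1}$, where $\xi_{n+1} := F(X_{n+1}, m_n) - \phi(m_n)$ is a martingale-difference sequence with respect to $(\mathcal{F}_n)$. Writing $\Delta_n := R_n - R(X)$ and $\epsilon_n := \phi(m_n) - R(X)$, the recursion becomes the affine scheme
$$\Delta_{n+1} = (1-\gamma_n) \Delta_n + \gamma_n \epsilon_n + \gamma_n \xi_{n+1}.$$
By Theorem \ref{as}, $m_n \to m^*$ a.s., hence $\epsilon_n \to 0$ a.s.\ by continuity of $\phi$.

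The next step is to control the noise. By Assumption \ref{Aas'} there exists a neighborhood $V$ of $m^*$ and a constant $C$ with $\sup_{m \in V} E[l(-X-m)^2] \le C$; together with the boundedness of $K$ this gives $\sup_{m \in V} E[F(X,m)^2] < \infty$. Since $m_n \to m^*$ a.s., there is an a.s.-finite random index $N$ with $m_n \in V$ for all $n \ge N$, so that $E[\xi_{n+1}^2 \mid \mathcal{F}_n]$ is a.s.\ bounded for $n \ge N$. Combined with $\sum_n \gamma_n^2 < \infty$, the standard $L^2$-martingale convergence theorem implies that $M_n := \sum_{k=N}^{n-1} \gamma_k \xi_{k+1}$ converges a.s.

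To conclude, solve the recursion explicitly. Set $\pi_n := \prod_{k=0}^{n-1}(1-\gamma_k)$, so that $\pi_n \to 0$ since $\sum_n \gamma_n = +\infty$. Dividing the recursion by $\pi_{n+1} = (1-\gamma_n)\pi_n$ and summing telescopically yields
$$\Delta_n = \pi_n \Delta_0 + \pi_n \sum_{k=0}^{n-1} \frac{\gamma_k}{\pi_{k+1}} \epsilon_k + \pi_n \sum_{k=0}^{n-1} \frac{\gamma_k}{\pi_{k+1}} \xi_{k+1}.$$
The first term vanishes. Using the telescoping identity $\pi_n \sum_{k=0}^{n-1} \gamma_k/\pi_{k+1} = 1 - \pi_n \to 1$ and $\epsilon_k \to 0$ a.s., a weighted Toeplitz argument kills the second term. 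For the third, an Abel summation against the convergent partial sums of the $L^2$-martingale from the previous paragraph (combined with the same Toeplitz-type identity) gives convergence to $0$. Hence $\Delta_n \to 0$ a.s., i.e.\ $R_n \to R(X)$ a.s.

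The main obstacle is the local nature of Assumption \ref{Aas'}: the second-moment bound is only available in a neighborhood of $m^*$, not uniformly on $K$. The argument must therefore be run on the tail $n \ge N$, where $N$ is the a.s.-finite random time at which $(m_n)$ enters $V$ -- and whose existence is guaranteed precisely by Theorem \ref{as}. The finitely many pre-$N$ increments contribute an a.s.-bounded constant that gets absorbed by the factor $\pi_n \to 0$, so the splitting causes no asymptotic loss.
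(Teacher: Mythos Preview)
Your proof is correct and follows essentially the same route as the paper's: both solve the affine recursion via the product $\pi_n = \prod_k(1-\gamma_k)$ (the paper works with $S_n = 1/\pi_n$), handle the drift term $\epsilon_n\to 0$ by a Toeplitz/Ces\`aro argument using the identity $\pi_n\sum_k \gamma_k/\pi_{k+1}=1-\pi_n$, and control the martingale-difference noise via the $L^2$-martingale convergence theorem followed by Kronecker's lemma (your Abel summation is exactly the content of Kronecker). Your treatment of the localization near $m^*$ via the a.s.-finite random entrance time $N$ is slightly more explicit than the paper's one-line invocation of \ref{Aas'}, but the substance is identical.
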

\begin{proof} 
For $n \in \N$, define the sequence $(S_n)$ as:
$$S_n = \frac{1}{\prod_{k=0}^{n-1} (1 - \gamma_k)},~ S_0 = 0.$$
We have, 
\begin{equation}\label{S_n}
S_{n+1} = \frac{S_{n}}{1 - \gamma_{n}}=S_{n}\left(1+\frac{\gamma_{n}}{1-\gamma_{n}}\right)=S_{n}+\gamma_{n} S_{n+1}.
\end{equation}
Therefore using \eqref{rho_n}, we have, 
\begin{align*}
S_{n+1}R_{n+1} &= S_{n+1} (R_n -\gamma_n H_2(X_{n+1}, R_n, m_n))\\
&= S_n R_n + \gamma_n S_{n+1} R_n -\gamma_n S_{n+1} H_2(X_{n+1}, R_n, m_n)\\
&= S_n R_n + \gamma_n S_{n+1} R_n - \gamma_n S_{n+1} R_n +\gamma_n S_{n+1} F(X_{n+1},m_n)\\
&=S_n R_n + \gamma_n S_{n+1} F(X_{n+1},m_n).
\end{align*}
This implies for $n \in \N^*$,
\begin{equation}\label{inter1}
R_n = \frac{1}{S_n}R_0 + \frac{1}{S_n}\sum_{k=0}^{n-1}\gamma_k S_{k+1}F(X_{k+1},m_k).
\end{equation}
First, we have $$\log(S_n) = - \sum_{k=0}^{n-1} \log(1 -\gamma_k) \ge \sum_{k=0}^{n-1}\gamma_k,$$
and since by assumption, the RHS of the last inequality goes to $\infty$ as $n \to \infty$, we deduce that $S_n \to  \infty$ as $n \to \infty$ and we get immediately that the first term of the RHS of \eqref{inter1} goes to $0$ as $n$ goes to $\infty$.
Rewriting \eqref{inter1} by introducing $f(m) \vcentcolon= E[F(X, m)]$ and the martingale difference sequence $\delta M_{k+1} = F(X_{k+1}, m_k) - f(m_k)$ with respect to the natural filtration $\mathcal{F}_k \vcentcolon=\sigma(m_0, X_1,...,X_k)$, we obtain,
\begin{equation}\label{inter2}
R_n =  \frac{1}{S_n}R_0 + \frac{1}{S_n}\sum_{k=0}^{n-1}\gamma_k S_{k+1} \delta M_{k+1} + \frac{1}{S_n}\sum_{k=0}^{n-1}\gamma_k S_{k+1} f(m_k).
\end{equation}
Thanks to \eqref{S_n}, we have $\sum_{k=0}^{n-1} \gamma_k S_{k+1} = S_n$. Because $f$ is convex (assumption \ref{AlOCE}-\ref{AlOCE2}) and therefore continuous at $m^*$, Cesaro's Lemma implies that 
the third term in the previous equality converges to $f(m^*) = R(X)$. The a.s. convergence of $R_n$ will follow from the a.s. convergence of the second term toward $0$. Indeed, let us denote,
$$M_n^\gamma = \sum_{k=1}^{n}\gamma_{k-1} \delta M_{k}$$
Note that $(M_n^\gamma)$ is a $\mathcal{F}$-martingale such that 
$$\langle M^\gamma \rangle_\infty = \sum_{n=0}^\infty \gamma_n^2 E[|\delta M_n|^2 | \mathcal{F}_{n-1}].$$
But we also have,
$$E[|\delta M_n|^2|\mathcal{F}_{n-1}] \le E[|l(-X-m)|^2]_{|m = m_{n-1}},$$
and assumption \ref{Aas'} implies that 
$$\sup_{n\ge 1}E[|\delta M_n|^2|\mathcal{F}_{n-1}] < \infty,~a.s.$$
Using the martingale convergence theorem, we get that $(M_n^\gamma)$ converges to some random variable. Finally, by Kronecker's Lemma we deduce that the second term of \eqref{inter2} converges to $0$. This completes the proof.
\end{proof}
The step sequence in \ref{Aas}-\ref{Aas1} is typically of the following form $\gamma_n = \frac{c}{n^\gamma}$, where $\gamma \in (\frac{1}{2}, 1]$ and $c$ is a positive constant. The choice of $c$ plays a key role in the rate of convergence of SA algorithms. In order to circumvent problems related to the specification of the constant $c$, which are classical, we will use \q{averaging} techniques introduced by \citet{ruppert} and \citet{polyak}. We introduce the following assumptions:
\begin{enumerate}[label=($\mathcal{A}$\textsubscript{a.n.})]
\item
\begin{enumerate}[label = \roman*.]
\label{Aan}
\item\label{Aan1} $h_1$ is continuously differentiable and let $A \vcentcolon= Dh_1(m^*)$;
\item\label{Aan2} $(H_1(X_{n+1},m_n) \mathbf{1}_{|m_n - m^*| \le \rho})$ is uniformly integrable for small $\rho > 0$;
\item\label{Aan3} For some $p > 0$ and $\rho > 0, \underset{|m-m^*| \le \rho}{\sup} m^{2+p}(m) < \infty$;
\item\label{Aan4} $\Sigma(\cdot)$ is continuous at $m^*$ and  $\Sigma^*\vcentcolon= \Sigma(m^*)$ is positive definite.
\end{enumerate}
\end{enumerate}
The next theorem states the rate convergence of the average of the iterates of (RM) algorithm. 
\begin{Theorem}
Assume $\gamma \in (\frac{1}{2}, 1)$ and that assumptions \ref{AlOCE}, \ref{Aas} and \ref{Aan} hold. For any arbitrary $t > 0$, we define the (PR) sequence $(\overline{m}_n)$ as:
\begin{equation}\label{PR}
\overline{m}_n \vcentcolon = \frac{\gamma_n}{t}\sum_{i=n}^{n + t/\gamma_n - 1} m_i,
\end{equation}
where any upper summation index is interpreted as its integer part. Then, we have
\begin{equation}
\sqrt{\frac{t}{\gamma_n}}(\overline{m}_n - m^*) \to \mathcal{N}\left(0, A^{-1}\Sigma^*(A^{-1})^\intercal + O\left(\frac{1}{t}\right)\right).
\end{equation}
\end{Theorem}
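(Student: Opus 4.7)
The plan is the classical Polyak--Ruppert averaging argument, adapted to the moving window $[n, n + \lfloor t/\gamma_n \rfloor - 1]$. I would first linearize the recursion around $m^*$. Since $m_n \to m^*$ a.s. by Theorem~\ref{as} and $m^* \in \mathrm{int}(K)$, the projection in \eqref{SA} is eventually inactive. Setting $\tilde m_n := m_n - m^*$, $\xi_{n+1} := H_1(X_{n+1},m_n) - h_1(m_n)$ (a martingale difference w.r.t.\ $\mathcal{F}_n = \sigma(m_0,X_1,\ldots,X_n)$), and using the $C^1$ regularity of $h_1$ at $m^*$ from \ref{Aan}-\ref{Aan1} to write $h_1(m_n) = A\tilde m_n + r_n$ with $r_n = o(\|\tilde m_n\|)$, the dynamics reduce to
\begin{equation*}
\tilde m_{n+1} = \tilde m_n + \gamma_n\bigl(A\tilde m_n + r_n + \xi_{n+1}\bigr).
\end{equation*}
A preliminary a priori bound $E[\|\tilde m_n\|^{2+p}] = O(\gamma_n^{1+p/2})$ is then required; this is a Lyapunov-type estimate exploiting the stability of $-A$ (positive definite, since $A = -\nabla^2 f(m^*)$ for $f(m) = \sum m_i + E[l(-X-m)]$ strictly convex by \ref{AlOCE}-\ref{AlOCE2}) combined with the moment control in \ref{Aan}-\ref{Aan3}.

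Next, I would sum the linearized recursion over $i \in \{n,\ldots,N\}$ with $N := n + \lfloor t/\gamma_n \rfloor - 1$. Because $\gamma \in (1/2, 1)$, the ratio $\gamma_i/\gamma_n$ equals $1 + o(1)$ uniformly over the window, so the telescoping $\sum_i(\tilde m_{i+1} - \tilde m_i) = \tilde m_{N+1} - \tilde m_n$ yields the decomposition
\begin{equation*}
A(\overline{m}_n - m^*) = \frac{1}{t}(\tilde m_{N+1} - \tilde m_n) - \frac{\gamma_n}{t}\sum_{i=n}^{N}\xi_{i+1} - \frac{\gamma_n}{t}\sum_{i=n}^{N}r_i + o\bigl(\sqrt{\gamma_n/t}\bigr).
\end{equation*}
Multiplying by $\sqrt{t/\gamma_n}$, I would then identify the limit of each term. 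The martingale sum $\sqrt{\gamma_n/t}\sum_{i=n}^{N}\xi_{i+1}$ converges in distribution to $\mathcal{N}(0,\Sigma^*)$ by the Lindeberg--Feller martingale CLT: the conditional covariances $\Sigma(m_i)$ tend to $\Sigma^*$ by \ref{Aan}-\ref{Aan4}, while \ref{Aan}-\ref{Aan2} and \ref{Aan}-\ref{Aan3} deliver the Lindeberg/Lyapunov condition. The boundary term $(\tilde m_{N+1} - \tilde m_n)/\sqrt{t\gamma_n}$ is $O_{L^2}(1/\sqrt t)$ by the a priori bound, which is precisely the source of the $O(1/t)$ correction to the limiting covariance. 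The nonlinear remainder vanishes because $\|r_i\|_{L^2} = o(\sqrt{\gamma_i})$, and a Cauchy--Schwarz estimate over the $t/\gamma_n$ terms of the window suffices. Inverting $A$ delivers the announced Gaussian limit.

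The main obstacle is the tight $L^2$-control of the boundary and remainder terms on the window of length $t/\gamma_n$, both of which rest on the a priori moment bound $E[\|\tilde m_n\|^{2+p}] = O(\gamma_n^{1+p/2})$. Deriving this bound is the technically delicate ingredient: it requires a careful Lyapunov argument using the stability of $-A$ together with the $(2+p)$-moment assumption \ref{Aan}-\ref{Aan3}, and it is at this step that the hypothesis $\gamma > 1/2$ is essential in order to absorb the accumulated variance of the noise over the window without blowing up the remainder.
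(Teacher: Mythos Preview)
Your route is sound but differs substantially from the paper's. The paper does not reproduce any of the Polyak--Ruppert machinery: it simply invokes Theorem~1.1 of Chapter~11 in \citet{kushner} and checks, one by one, that the standing hypotheses (A2.0)--(A2.7) and (A1.1) of Kushner's Chapters~10--11 are implied by \ref{AlOCE}, \ref{Aas} and \ref{Aan} (with (A2.2) coming from Theorem~\ref{as} and (A2.3) from Kushner's Theorem~4.1, Chapter~10). Your direct linearize--sum--CLT argument is the classical one underlying Kushner's result and has the merit of being self-contained and of making transparent where the $O(1/t)$ correction originates. The cost is that the a~priori rate $E[\|\tilde m_n\|^{2}]=O(\gamma_n)$ you flag as the delicate step is itself a nontrivial theorem (essentially the RM rate of convergence), so in practice you would still be appealing to the same body of results the paper cites. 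One point to tighten: having the boundary term merely $O_{L^2}(1/\sqrt t)$ only bounds its variance contribution; to obtain a genuine Gaussian limit with covariance $A^{-1}\Sigma^*(A^{-1})^\intercal+O(1/t)$ for each fixed $t$ you also need the \emph{joint} weak convergence of the boundary and martingale terms, which in Kushner's framework comes from the interpolated process converging to a stationary Ornstein--Uhlenbeck limit.
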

\begin{proof}
This is a consequence of Theorem 1.1 chapter 11 page 377 in \citet{kushner} if we could verify their assumption (A1.1). Thanks to Theorem 2.1 of chapter 10 in \citet{kushner}, the condition (A1.1) is satisfied as soon as their conditions (A2.0)-(A2.7) hold. Assumption (A2.0) is automatically verified. (A2.1) is satisfied by \ref{Aan}-\ref{Aan2}. (A2.2) is a consequence of Theorem \ref{as}. (A2.4) follows from Taylor's expansions and \ref{Aan}-\ref{Aan1}. (A2.5) follows from the fact that $h_1(m^*) = 0$.  (A2.6) is satisfied since  $m^*$ is the optimum of a convex optimization problem. The first part and second parts of (A2.7) are guaranteed thanks to \ref{Aan}-\ref{Aan3} and \ref{Aan}-\ref{Aan4}. Finally, (A2.3) follows easily from Theorem 4.1 chapter 10 page 341 in \citet{kushner} since all their assumptions (A4.1)-(A4.5) are satisfied.
\end{proof}
\begin{Remark}
The previous CLT theorem states that under suitable conditions our average sequence is asymptotically normal with a corresponding covariance matrix that depends on $\Sigma^*$ and $A$. These quantities are unknown to us because,  first, in general,  they cannot be expressed in a closed form and second they depend on the optimum $m^*$. So, in practice, these two quantities need to be approximated in order to derive confidence intervals. In the following proposition, we provide consistent estimators of these two quantities.
\end{Remark}
\begin{Proposition}
Suppose \ref{AlOCE}, \ref{Aas} and \ref{Aan} hold. If $m \to E[||H_1(X, m)||^4]$ is bounded around $m^*$, then,
\begin{equation}\label{sigmaEst}
\Sigma_n \vcentcolon= \frac{1}{n}\sum_{k=0}^{n-1} H_1(X_{k+1}, m_k)^\intercal H_1(X_{k+1}, m_k) \to \Sigma^*~\text{a.s. as}~n \to \infty.
\end{equation}
Let $A_n^\epsilon$ be the matrix whose elements $A_n^\epsilon(i,j)$ for $i,j \in \{1,...,d\}$ are defined as follows:
$$ A_n^\epsilon(i,j) \vcentcolon= \frac{1}{\epsilon n}\sum_{k=0}^{n-1} H_1^i(X_{k+1}, m_k + \epsilon e_j) - H_1^i(X_{k+1}, m_k),$$
then, 
\begin{equation}\label{jacEst}
\underset{\epsilon \to 0}{\lim}~\underset{n \to \infty}{\lim} A_n^\epsilon = A~a.s.
\end{equation}
\end{Proposition}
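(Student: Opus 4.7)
The plan is to treat both estimators by the same bias–variance decomposition: isolate a term whose conditional expectation depends only on $m_k$ (and converges by Cesàro once we know $m_k\to m^*$), and a centered martingale-difference term which we kill via Kronecker's lemma combined with the $L^4$ assumption.

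First I would handle \eqref{sigmaEst}. Introduce the matrix-valued function $\phi(m) := E[H_1(X,m)^{\intercal} H_1(X,m)]$. Since $\phi(m)=\Sigma(m)+h_1(m) h_1(m)^{\intercal}$ and $h_1(m^*)=0$, we have $\phi(m^*)=\Sigma^*$, and $\phi$ is continuous at $m^*$ by \ref{Aan}-\ref{Aan1}, \ref{Aan}-\ref{Aan4}. Decompose
\[
\Sigma_n = \underbrace{\frac{1}{n}\sum_{k=0}^{n-1}\bigl(H_1(X_{k+1},m_k)^{\intercal} H_1(X_{k+1},m_k)-\phi(m_k)\bigr)}_{=: M_n} \;+\; \underbrace{\frac{1}{n}\sum_{k=0}^{n-1}\phi(m_k)}_{=: C_n}.
\]
By Theorem~\ref{as}, $m_k\to m^*$ a.s.; continuity of $\phi$ at $m^*$ and Cesàro's lemma then give $C_n\to\Sigma^*$ a.s. For $M_n$, set $\xi_{k+1}:=H_1(X_{k+1},m_k)^{\intercal} H_1(X_{k+1},m_k)-\phi(m_k)$; since $X_{k+1}$ is independent of $\mathcal{F}_k$ and $m_k$ is $\mathcal{F}_k$-measurable, $(\xi_{k+1})$ is a martingale difference sequence. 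Using the $L^4$-boundedness of $m\mapsto E[\|H_1(X,m)\|^4]$ around $m^*$ together with $m_k\to m^*$, we obtain that $E[\|\xi_{k+1}\|^2\mid\mathcal{F}_k]$ is a.s. eventually bounded, hence $\sum_k (k+1)^{-2}E[\|\xi_{k+1}\|^2\mid\mathcal{F}_k]<\infty$ a.s. By the $L^2$-martingale convergence theorem, $\sum_k \xi_{k+1}/(k+1)$ converges a.s.; Kronecker's lemma then yields $M_n\to 0$ a.s., proving \eqref{sigmaEst}.

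For \eqref{jacEst}, I fix $\epsilon>0$ and apply the same decomposition componentwise. Writing
\[
G^{\epsilon}_{k+1}(i,j):=\tfrac{1}{\epsilon}\bigl(H_1^i(X_{k+1},m_k+\epsilon e_j)-H_1^i(X_{k+1},m_k)\bigr),\qquad \psi^{\epsilon}_{ij}(m):=\tfrac{1}{\epsilon}\bigl(h_1^i(m+\epsilon e_j)-h_1^i(m)\bigr),
\]
we have $A_n^{\epsilon}(i,j)=\frac{1}{n}\sum_{k=0}^{n-1}\bigl(G^{\epsilon}_{k+1}(i,j)-\psi^{\epsilon}_{ij}(m_k)\bigr)+\frac{1}{n}\sum_{k=0}^{n-1}\psi^{\epsilon}_{ij}(m_k)$. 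The martingale-difference part vanishes a.s. by the same Kronecker/martingale argument (the $L^4$ hypothesis, together with $\|G^{\epsilon}_{k+1}\|\le \epsilon^{-1}(\|H_1(X_{k+1},m_k+\epsilon e_j)\|+\|H_1(X_{k+1},m_k)\|)$, again gives bounded conditional second moments eventually, for fixed $\epsilon$). Continuity of $\psi^{\epsilon}_{ij}$ at $m^*$ (inherited from continuity of $h_1$) and Cesàro then yield $\lim_{n}A_n^{\epsilon}(i,j)=\psi^{\epsilon}_{ij}(m^*)$ a.s. Finally, by differentiability of $h_1$ at $m^*$ (assumption~\ref{Aan}-\ref{Aan1}), $\lim_{\epsilon\to 0}\psi^{\epsilon}_{ij}(m^*)=\partial_{m_j} h_1^i(m^*)=A(i,j)$, which is the claim.

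The main obstacle is the martingale step: Kronecker's lemma forces us to establish the a.s. summability $\sum_k (k+1)^{-2}E[\|\xi_{k+1}\|^2\mid\mathcal{F}_k]<\infty$, and this is precisely what necessitates the additional $L^4$ hypothesis in the statement, since $H_1(X,m_k)^{\intercal} H_1(X,m_k)$ only sits in $L^2$ when $H_1(X,m_k)\in L^4$. A minor technical point is that the $L^4$ bound is assumed only around $m^*$, not on all of $K$; this is handled by noting that a.s. $m_k$ eventually enters any prescribed neighborhood of $m^*$, so the tail of the sum is a.s. controlled, while the initial finite segment has no effect on convergence.
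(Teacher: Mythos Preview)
Your proposal is correct and follows essentially the same route as the paper: split $\Sigma_n$ into a martingale-difference average plus a Cesàro average of the conditional expectation, control the first via the $L^4$ hypothesis combined with the martingale convergence theorem and Kronecker's lemma, and the second via continuity at $m^*$ and $m_k\to m^*$; then repeat the scheme for $A_n^\epsilon$. The only cosmetic difference is that you bundle $\Sigma(m_k)+h_1(m_k)h_1(m_k)^\intercal$ into a single function $\phi(m_k)$, whereas the paper keeps the two pieces separate, and you spell out the argument for \eqref{jacEst} where the paper simply says the same method applies.
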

\begin{proof}
The proof of this proposition relies mainly on the martingale convergence theorem. Let $(\delta M_k)_{k \in \N}$ be the sequence defined as:
$$\delta M_k \vcentcolon=H_1(X_{k+1}, m_k)^\intercal H_1(X_{k+1}, m_k) - \Sigma(m_{k}) - h_1(m_k)^\intercal h_1(m_k), ~k \ge 0.$$
$(\delta M_k)_{k \ge 0}$ is a martingale difference sequence adapted to $\mathcal{F}$ and therefore the following sequence $(M_k)_{k \in \N^*}$ defined as:
$$ M_k \vcentcolon= \sum_{i=1}^{k} \frac{\delta M_{i}}{i},~ k \ge 1,$$
is a $\mathcal{F}$-martingale. Furthermore, the boundedness of $m \to E[||H_1(X,m)||^4]$ around $m^*$, assumptions \ref{Aas}-\ref{Aas2} and \ref{Aan}-\ref{Aan4} imply that:
$$\underset{k\ge 1}{\sup}E[||\delta M_n||^2|\mathcal{F}_n] < \infty~a.s.$$
Consequently, the martingale convergence theorem implies the existence of a finite random variable $M_\infty$ such that $M_n \to M_\infty$. We then apply Kronecker's Lemma to get that $\frac{1}{n}\sum_{k=0}^{n-1} \delta M_{k+1} \to 0$. Since,
$$\Sigma_n = \frac{1}{n}\sum_{k=0}^{n-1} \delta M_{k} + \frac{1}{n} \sum_{k=0}^{n-1}\Sigma(m_k) + \frac{1}{n}\sum_{k=0}^{n-1} h_1(m_k)^\intercal h_1(m_k),$$
we deduce that $\Sigma_n \to \Sigma^*$. The proof of \eqref{jacEst} follows using the same arguments above.
\end{proof}
\begin{Remark}
\begin{enumerate}\
\item Instead of averaging on all observations for the estimators above, we could average using only recent ones. This might improve the behaviour of these estimators.
\item If we denote $V_n^\epsilon = (A_n^\epsilon)^{-1}\Sigma_n ((A_n^\epsilon)^{-1})^\intercal$, then we can obtain an approximate confidence interval for $m^*$ with a confidence level of $1-\alpha$ in the following form:
\begin{equation}
\left[\overline{m}^j_n -\sqrt{\frac{V_n^{\epsilon,jj}}{t c n^\gamma}}q_\alpha, \overline{m}^j_n -\sqrt{\frac{V_n^{\epsilon,jj}}{t c n^\gamma}}q_\alpha\right],~j\in \{1,...,d\}, \gamma \in (\frac{1}{2}, 1),
\end{equation}
where $q_\alpha$ is the $1-\frac{\alpha}{2}$ quantile of a standard random variable.
\end{enumerate}
\end{Remark}
\section{Numerical Analysis and Examples}
\label{numericalAnalysis}
In this section, we analyze and test the numerical methods developed in the previous section for the estimation of optimal allocations given by \eqref{PR} and risk measures given by \eqref{rho_n}. The implementation was done on a standard computer using Python and we write CT for computational time. All the computations were run on a standard laptop with a processor Intel(R) Core(TM) i7-9850H CPU @ 2.60GHz. The common parameters used in the computations are summarized in the following table:
\begingroup
\setlength{\tabcolsep}{10pt} 
\renewcommand{\arraystretch}{1.2}
\begin{table}[H]
\centering
\begin{tabular}{|c|c|}
\hline
Parameters & Values\\
\hline
$n$ & $500000$\\
\hline
$\gamma$ & $0.8$\\
\hline
$t$ & $10$\\
\hline
$c$ & $1$\\
\hline
$\epsilon$ & $10^{-6}$\\
\hline
\end{tabular}
\caption{Set of common parameters.}
\end{table}
\endgroup
\subsection{A first example}
We start here by estimating optimal allocations and multivariate OCE for the first loss function in \eqref{exponential}, that is:
$$l(x) = \sum_{i=1}^d \frac{e^{\lambda_i x_i}-1}{\lambda_i} + \alpha e^{\sum_{i=1}^d \lambda_i x_i}, ~\lambda_i > 0,~\alpha \ge 0.$$
We denote by $\lambda$ the vector of $\lambda_i$, $i \in \{1,...,d\}$. 
First, we test our algorithms in the case $d=2$ and the vector $X$ having a Gaussian distribution, as optimal allocations are expressed in a closed form in this case (see \eqref{optimal}). This will allow us to test the efficiency of our algorithms. Three cases are considered: In the first case, we take $\alpha = 0$ and  $\lambda = (1, 2)$, which as previously mentioned, corresponds to the entropic risk measure, a second one with $\alpha = 1$ and $\lambda = (1, 1)$, and finally a third one with $\alpha = 1$, $\lambda = (1, 2)$. As for the parameters for the normal distribution of $X$, we fix $\sigma_1 = \sigma_2 = 1$ and we take $\rho \in \{-0.9, -0.5, 0, 0.5, 0.9\}$ for each of the three cases. The compact set $K$ was set to $[0, 3]^2$ and the initial term $m_0 = (0, 0)$. 
\begingroup
\setlength{\tabcolsep}{3pt} 
\renewcommand{\arraystretch}{1.2}
\begin{table}[H]
\centering
\begin{tabular}{|c|c|c|c|c|c|c|c|c|c|}
\hline
$\rho$ & $R_n$ &$\overline{m}^1_n$ & $\overline{m}^2_n$ & CI1 & CI2 & $R(X)$& $m_*^1$ & $m_*^2$ & CT(s)\\
\hline
$-0.9$ & $1.5133$ & $0.4987$ &$0.9983$  & $[0.4945, 0.5030]$ & $[0.9874, 1.0093]$ & $1.5$ & $0.5$& $1$ & $68.8087$\\
$-0.5$ & $1.5220$ & $0.4964$ & $1.0010$ &$[0.4922, 0.5007]$ &$[0.9908, 1.0112]$ & $1.5$ & $0.5$ & $1$& $68.5388$ \\
$0$ &$1.5054$ & $0.4999$ & $1.0022$ & $[0.4956, 0.5042]$ & $[0.9888, 1.0156]$& $1.5$ & $0.5$ & $1$ & $69.1064$\\
$0.5$ &$1.5147$ & $0.5049$ & $0.9906$ & $[0.5006, 0.5092]$ & $[0.9803, 1.0009]$ &$1.5$  & $0.5$ & $1$ & $70.0251$\\
$0.9$ & $1.5264$ & $0.5031$ &$0.9970$ & $[0.4988, 0.5074]$& $[0.9867, 1.0073]$ & $1.5$ & $0.5$ & $1$ & $70.8549$\\
\hline
\end{tabular}
\caption{Numerical results: $\alpha = 0$ and $\lambda = (1, 2)$.}
\label{1stCase}
\end{table}
\endgroup
The table above summarizes the numerical results for the first case. The two columns CI1 and CI2 represent the confidence intervals of the (PR) estimators with a confidence level of $95\%$. Since $\alpha = 0$, the exact optimal allocations do not depend on the correlation coefficient $\rho$. This explains why we obtain the same values for $m^*$ for different values of $\rho$. The same remark goes for the estimators $\overline{m}_n$. Since $\lambda_2 > \lambda_1$, we expect as per formula \eqref{optimal} that $m_*^2 > m_*^1$. These numerical results suggest that the (PR) estimators $\overline{m}_n$ as well as the (RM) estimator $R_n$ approximate very well the exact optimal allocations $m^*$ and the risk measure $R(X)$. The width of the first confidence intervals (resp. second confidence intervals) is approximately $0.008$ (resp. $0.02$) which gives an accuracy of $1.5\%$ (resp. $2\%$) for the first estimator $\overline{m}_n^1$ (resp. $\overline{m}_n^2$).
\begingroup
\setlength{\tabcolsep}{3pt} 
\renewcommand{\arraystretch}{1.2}
\begin{table}[H]
\centering
\begin{tabular}{|c|c|c|c|c|c|c|c|c|c|}
\hline
$\rho$ & $R_n$ &$\overline{m}^1_n$ & $\overline{m}^2_n$ & CI1 & CI2 & $R(X)$ &$m_*^1$ & $m_*^2$ & CT(s)\\
\hline
$-0.9$ & $1.3139$ & $0.7689$ & $0.7704$  & $[0.7651, 0.7728]$ & $[0.7665, 0.7742]$ & $1.3036$ & $0.7702$& $0.7702$ & $69.1816$\\
$-0.5$ & $1.4198$ & $0.8511$ & $0.8552$ & $[0.8471, 0.8550]$ & $[0.8512, 0.8592]$ & $1.4105$ & $0.8545$ & $0.8545$& $73.6128$ \\
$0$ &$1.5897$ & $0.9827$ & $0.9801$ & $[0.9782,  0.9873]$ & $[0.9755, 0.9846]$& $1.5804$ &$0.9812$ & $0.9812$ & $69.8510$\\
$0.5$ & $1.8171$ & $1.1368$ & $1.1280$ & $[1.1307, 1.1430]$ & $[1.1220, 1.1339]$ & $1.7928$ & $1.1301$ & $1.1301$ & $71.4592$\\
$0.9$ & $2.0305$ & $1.2697$ &$1.2651$ & $[1.2612, 1.2782]$& $[1.2568, 1.2734]$& $1.9932$ & $1.2636$ & $1.2636$ & $73.0060$\\
\hline
\end{tabular}
\caption{Numerical results: $\alpha = 1$ and $\lambda = (1, 1)$.}
\label{2ndCase}
\end{table}
\endgroup
When taking the same values for $\lambda_1$ and $\lambda_2$, the system becomes symmetric and we obtain the same optimal allocations for the first and second component. We also notice that optimal allocations and their estimators increase with the correlation coefficient $\rho$ as it was expected from remark \ref{remark}. Again, the estimators approximate well the optimal allocations and the risk measure. The accuracy of all confidence intervals is around $\approx 1 \%$.
\begingroup
\setlength{\tabcolsep}{3pt} 
\renewcommand{\arraystretch}{1.2}
\begin{table}[H]
\centering
\begin{tabular}{|c|c|c|c|c|c|c|c|c|c|}
\hline
$\rho$ & $R_n$ &$\overline{m}^1_n$ & $\overline{m}^2_n$ & CI1 & CI2 & $R(X)$ &$m_*^1$ & $m_*^2$ & CT(s)\\
\hline
$-0.9$ & $1.6477$ & $0.6194$ & $1.1275$  & $[0.6152, 0.6237]$ & $[1.1184, 1.1366]$ & $1.6354$ & $0.6202$& $1.1285$ & $79.6036$\\
$-0.5$ & $1.7734$ & $0.7045$ & $1.2366$ & $[0.7001, 0.7089]$ & $[1.2280, 1.2452]$ & $1.7544$ & $0.7071$ & $1.2344$& $81.1432$ \\
$0$ &$2.0148$ & $0.8479$ & $1.4449$ & $[0.8421,  0.8538]$ & $[1.4309,  1.4588]$& $1.9943$ &$0.8465$ & $1.4406$ & $76.7199$\\
$0.5$ & $2.3749$ & $0.9922$ & $1.7260$ & $[0.9844, 1.0001]$ & $[1.7044, 1.7476]$ & $2.3354$ & $0.9859$ & $1.7344$ & $80.4979$\\
$0.9$ & $2.7790$ & $1.0812$ &$2.0416$ & $[1.0710,  1.0914]$& $[1.9981, 2.0850]$& $2.6652$ & $1.0728$ & $2.0285$ & $81.1827$\\
\hline
\end{tabular}
\caption{Numerical results: $\alpha = 1$ and $\lambda = (1, 2)$.}
\label{3rdCase}
\end{table}
\endgroup
In this final case, we take different values for $\lambda_1$ and $\lambda_2$. Table \ref{3rdCase} shows that the optimal allocations can be well approximated by the estimator in \eqref{PR}. This is also the case for the estimator $R_n$. Again, the optimal allocations as well as the risk measure increase with the correlation coefficient (see Figure \ref{loss1}).  All confidence intervals have an accuracy between $1\%$ and $2\%$ except the second confidence interval in the case $\rho = 0.9$ where the accuracy is a bit higher and is approximately around  $4\%$.
\begin{figure}[H]
    \centering
    \includegraphics[scale=0.9]{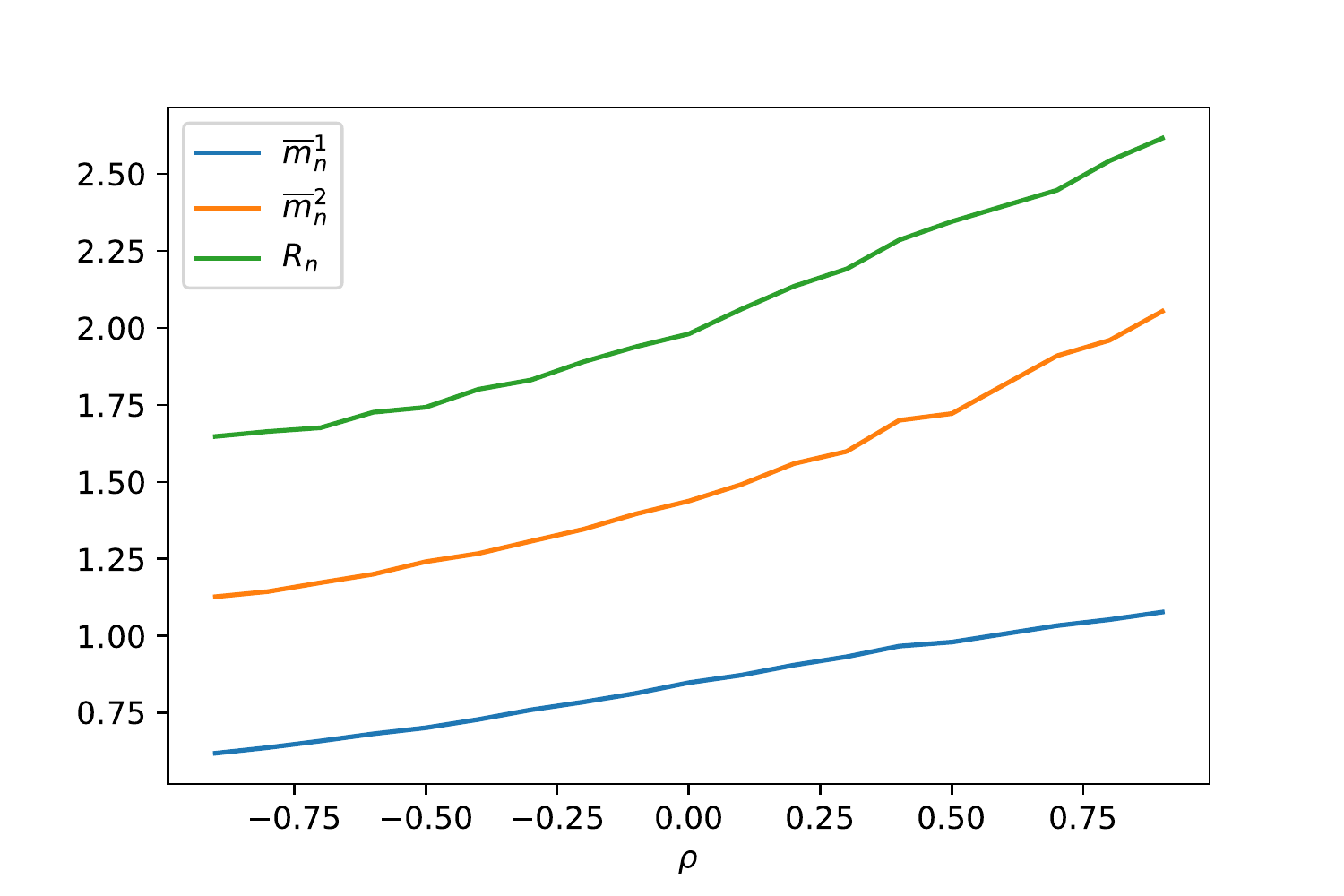}
    \caption{$R_n$, $\overline{m}_n^1$ and $\overline{m}_n^2$ as a function of $\rho$.}
    \label{loss1}
\end{figure}
\subsection{Second example}
\subsubsection{Simulated data}
In this example, we will be working with a Multivariate Normal Inverse Gaussian (MNIG) distribution for the vector $X$ instead of a Gaussian distribution. The MNIG distribution yields a more flexible family of distributions that can be skewed and have fatter tails than the Gaussian distribution. For a fixed $d$, a MNIG distributed random variable is a variance-mean mixture of a $d$-Gaussian random variable $Y$ with a univariate inverse Gaussian distributed mixing variable $Z$. The MNIG distribution has five parameters $\alpha_{MNIG} > 0, \beta \in \R^d, \delta > 0, \mu \in \R^d$ and $\Gamma \in \R^{d\times d}$ and can be constructed as follows:
\begin{equation}\label{MNIG}
X = \mu + Z\Gamma\beta + \sqrt{Z}\Gamma^{1/2} Y,
\end{equation} 
where $Z \sim IG(\delta^2, \alpha_{MNIG}^2-\beta^\intercal\Gamma\beta)$ and $IG(\chi,\psi)$ denotes the Inverse Gaussian distribution with parameters $\chi, \psi > 0$ and $Y \sim \mathcal{N}(0, I_d)$. Note that the random variable $X|Z \sim \mathcal{N}(\mu + Z\Gamma \beta, Z\Gamma)$, hence the name variance-mean mixture. The parameters of the MNIG distribution have natural interpretations. The parameter $\alpha_{MNIG}$ is a shape parameter and controls the heaviness of the tails. The parameter $\beta$ is a skewness vector parameter, $\delta$ is a scale parameter and $\mu$ is a vector translation parameter. Finally, the matrix $\Gamma$ is assumed to be a positive semidefinite symmetric matrix and controls the degree of correlations between components and assumed to be such that $\mathrm{det}(\Gamma)=1$. In order for the MNIG to exist, the inequality $\alpha_{MNIG}^2 > \beta^\intercal \Gamma \beta$ must be satisfied. The cumulant generating function of the MNIG could be derived easily in a closed form of the parameters:
$$\Phi_X(t) = \delta \left(\sqrt{\alpha_{MNIG}^2 - \beta^\intercal \Gamma \beta} - \sqrt{\alpha_{MNIG}^2 - (\beta + it)^\intercal\Gamma (\beta + i t)}\right) + i t^\intercal \mu.$$
This shows that the MNIG is infinitely divisible. Thus, we can easily evaluate the moments of this distribution. The mean vector and the covariance matrix $\Sigma$ of $X$ are given in the following:
\begin{align}
E[X] &= \mu + \frac{\delta \Gamma \beta}{\alpha_{MNIG}^2 - \beta^\intercal \Gamma \beta},\\
\Sigma &= \delta \left(\alpha_{MNIG}^2 - \beta^\intercal \Gamma \beta\right)^{-1/2}\left[\Gamma + \left(\alpha_{MNIG}^2 - \beta^\intercal \Gamma \beta\right)^{-1}\Gamma \beta \beta^\intercal \Gamma\right].
\end{align}
Note that due to the parameter $\beta$, even when $\mu = 0$ (and $\Gamma = I_d$ resp.), the mean of the MNIG distribution is not null (the covariance matrix is not diagonal resp.). For more details about MNIG, we refer to \citet{oigaard2004multivariate}.\\
In order to make the numerical analysis more realistic, we fitted, the parameters of the MNIG distribution on the daily log-return of three European indices: CAC 40, BEL 20 and AEX. The estimated parameters obtained using the Expectation Maximization (EM) algorithm, explained in details in Section \ref{estimationPar}, are summarized in the first column of the following table \ref{setParameters}.
\begingroup
\setlength{\tabcolsep}{6pt} 
\renewcommand{\arraystretch}{1}
\begin{table}[H]
\centering
\begin{tabular}{cc}
\hline
Parameters & MNIG\\
\hline
$\alpha_{MNIG}$ & $365.78$ \\
\hline
$\delta$ & $0.00373$ \\
\hline
$\beta$ & $(-64.28, 41.45, 7.35)$ \\
\hline
$\mu$ & $(0.00084, 0.00024, 0.00055)$ \\
\hline
$\Gamma$ & $\begin{pmatrix}
2.338 & 1.796 & 2.080\\
1.796 & 2.327 & 2.088\\
2.080 & 2.088 & 2.555
\end{pmatrix}$ \\
\hline
\end{tabular}
\caption{Parameters sets for the MNIG.}
\label{setParameters}
\end{table}
\endgroup
The covariance matrix obtained from the MNIG calibrated distribution is given in the following:
$$\Sigma = \begin{pmatrix}
2.45\times 10^{-5} & 1.86\times 10^{-5} & 2.16\times 10^{-5}\\
1.86\times 10^{-5} & 2.40\times 10^{-5} & 2.16\times 10^{-5}\\
2.16\times 10^{-5} & 2.16\times 10^{-5} & 2.65\times 10^{-5}
\end{pmatrix}.$$
This shows that the log-returns of the three indices over the period considered are barely correlated. The following figure shows also that they almost have the same distribution:
\begin{figure}[H]
    \centering
    \includegraphics[scale=0.9]{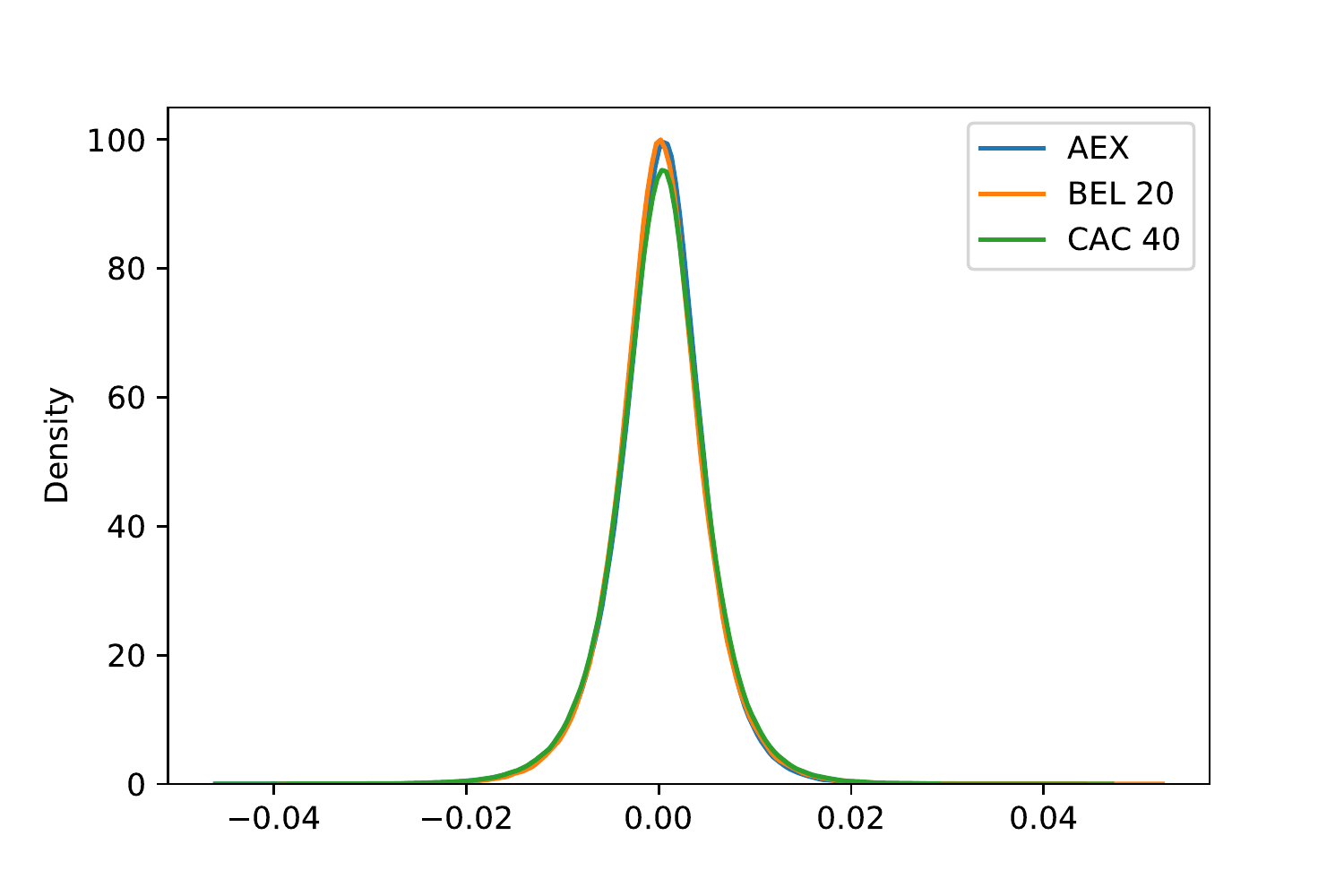}
    \caption{Densities of the log-returns of the three indices.}
    \label{densities}
\end{figure}
\subsubsection{Numerical Results}
We will test our SA algorithms with a trivariate MNIG distribution for the polynomial loss functions. We recall that the polynomial loss function is given by:
$$ l(x) = \sum_{i=1}^d \frac{([1 +x_i]^+)^{\theta_i} - 1}{\theta_i} + \alpha \sum_{i < j} \frac{([1 +x_i]^+)^{\theta_i}}{\theta_i} \frac{([1 +x_i]^+)^{\theta_j}}{\theta_j},~ \theta_i > 1,~\alpha \ge 0.$$
Since no closed formula is available to us in this case, we decided to use a Monte Carlo scheme as a benchmark to the SA method. This scheme  consists in approximating the expectation in \eqref{riskMeasure} by the corresponding Monte Carlo estimator and then to use Nelder-Mead algorithm as a minimization algorithm to find the optimal allocations. The compact set $K$ for the SA method was set to $[0, 2]^3$ and $\alpha$ was taken to be equal to $1$. First, we compare both methods in the case where the parameter $\theta$ was taken to be equal to $\theta = (2, 2, 2)$. Then, in a second case, we test both algorithms with the parameter $\theta = (1, 2, 3)$.
\begingroup
\setlength{\tabcolsep}{6pt} 
\renewcommand{\arraystretch}{1.2}
\begin{table}[H]
\centering
\begin{tabular}{cccc}
\hline
 & SA & CI-SA & Monte Carlo\\
\hline
$m_1^*$ & $0.31747$ & $[0.31746, 0.31749]$ & $0.31748$\\
\hline
$m_2^*$ & $0.31748$ & $[0.31746, 0.31750]$ & $0.31745$\\
\hline
$m_3^*$ & $0.31742$ & $[0.31740, 0.31743]$ & $0.31737$ \\
\hline
$R(X)$ & \multicolumn{2}{c}{$0.31336$} & $0.31332$ \\
\hline
CT(s) & \multicolumn{2}{c}{$141.20$} & $28.07$\\
\hline
\end{tabular}
\caption{Numerical results: Polynomial loss function with $\theta = (2, 2, 2)$ and MNIG distribution.}
\label{PolynomMNIG1}
\end{table}
\endgroup
The table \ref{PolynomMNIG1} show that both methods give approximately the same values for the optimal allocations $m^*$ as well as the risk measure $R(X)$. The values of the optimal allocations are approximately the same among the three components. This could be explained by the fact that the three components have almost the same distribution as shown in the figure \ref{densities} and the fact that we have taken $\theta = (2, 2, 2)$, so that the system becomes nearly symmetric. The Monte Carlo method is seven times faster that the SA method. However, with the Monte Carlo method, we do not have any confidence intervals and hence no control over the error of estimation. Moreover, since in the Monte Carlo method, we are using a deterministic minimization algorithm, it is sensitive to the initial values (Recall that we do not have this problem with the SA method). We do not have this problem with the SA method.
\begingroup
\setlength{\tabcolsep}{6pt} 
\renewcommand{\arraystretch}{1.2}
\begin{table}[H]
\centering
\begin{tabular}{cccc}
\hline
 & SA & CI-SA & Monte Carlo\\
\hline
$m_1^*$ & $0.21996$ & $[0.21995, 0.21998]$ & $0.21994$\\
\hline
$m_2^*$ & $0.25127$ & $[0.25125, 0.25129]$ & $0.25130$\\
\hline
$m_3^*$ & $0.29929$ & $[0.29927, 0.29931]$ & $0.29926$ \\
\hline
$R(X)$ & \multicolumn{2}{c}{$0.37532$} & $0.37529$ \\
\hline
CT(s) & \multicolumn{2}{c}{$98.48$} & $9.85$\\
\hline
\end{tabular}
\caption{Numerical results: Polynomial loss function with $\theta = (1, 2, 3)$ and MNIG distribution.}
\label{ExpMNIG1}
\end{table}
\endgroup
\section{Appendix: Estimation of MNIG parameters}
\label{estimationPar}
\subsection{Computational aspects}
In this section, we give more details about the estimation of the MNIG parameters. The most conventional way to estimate the latters is the maximum likelihood estimation method. However, in the case of MNIG, this method shows slow convergence due to the complexity of the likelihood. We therefore, propose here to use the Expectation Maximization (EM) algorithm which is known to be fast and accurate. The EM algorithm is a powerful tool that is used for maximum likelihood estimation for data containing \q{missing} values. This is suitable for distributions arising as mixtures which is the case of MNIG distributions where the mixing variable $Z$ is unobserved. The EM algorithm is an iterative algorithm that consists of two steps at each iteration. Denoting $\theta = (\delta, \mu, \beta, \alpha_{MNIG}, \Gamma)$, $X=(X_1,...,X_N)$ the observed data and $Z=(Z_1,...,Z_N)$ the unobserved one, $L(X, Z, \theta) = \log(\mathcal{P}(X, Z) | \theta)$ the complete data likelihood and $\theta^n$ the estimate of $\theta$ at step $n$, we repeat the two following steps until some convergence criteria is verified:
\begin{itemize}
\item E-step : Compute $Q(\theta | \theta^{n}) \vcentcolon= E_{Z | X, \theta^n}[L(X, Z, \theta^n)]$.
\item M-step : choose $\theta^{n+1} = \underset{\theta}{\argmax}~ Q(\theta | \theta^n)$.
\end{itemize}
Next, we explicit the calculations of $Q(\theta|\theta^n)$ in the E-step for the MNIG distribution. We have, by taking the constants away, 
\begin{align*}
L(X, Z, \theta) &= \log\left(\mathcal{P}(X, Z | \theta)\right)\\
&= \log\left(\mathcal{P}(X | Z, \theta)\right) + \log\left(\mathcal{P}(Z |\theta)\right)\\
&= -\frac{d}{2}\sum_{i=1}^{N} \log(Z_i) -\frac{N}{2}\log(\mathrm{det}(\Gamma))-\frac{1}{2}\sum_{i=1}^N\frac{1}{Z_i}(X_i - \mu - Z_i \Gamma \beta)^\intercal\Gamma^{-1}(X_i - \mu - Z_i \Gamma \beta)+\\
&N \sqrt{\delta^2(\alpha_{MNIG}^2 -\beta^\intercal\Gamma \beta)} + N\log(\delta)-\frac{3}{2}\sum_{i=1}^N \log(Z_i)-\frac{1}{2}\sum_{i=1}^N\left(\delta^2 \frac{1}{Z_i} + (\alpha_{MNIG}^2-\beta^\intercal \Gamma \beta) Z_i\right).
\end{align*}
Taking the conditional expectation on the both sides and denoting $\zeta^n = (\zeta_i^n)_{i=1,...,N}$ and $\phi^n = (\phi_i^n)_{i=1,...,N}$, where $\zeta_i^n \vcentcolon= E_{Z|X, \theta^n}[Z_i]$ and $\phi_i^n \vcentcolon= E_{Z|X, \theta^n}[\frac{1}{Z_i}]$, we get, again by removing the quantities that does not depend on $\theta$,
\begin{align*}
Q(\theta |\theta^n) &=- \frac{N}{2}\log(\mathrm{det}(\Gamma))-\frac{1}{2}\sum_{i=1}^N \left(\phi_i^n(X_i-\mu)^\intercal \Gamma^{-1}(X_i-\mu) + \zeta_i^n\beta^\intercal \Gamma \beta - 2(X_i -\mu)^\intercal \beta\right)\\
&+ N\sqrt{\delta^2(\alpha_{MNIG}^2 -\beta^\intercal\Gamma \beta)} + N\log(\delta)-\frac{1}{2}\sum_{i=1}^N\left(\phi_i^n \delta^2 + \zeta_i^n(\alpha_{MNIG}^2-\beta^\intercal \Gamma \beta) \right).
\end{align*}
The quantities $\phi_i^n$ and $\zeta_i^n$ can be derived from the fact that $Z|X,\theta$ follows a Generalized Inverse Gaussian distribution, i.e., $Z|X,\theta \sim GIG\left(-\frac{d+1}{2}, q(X), \alpha_{MNIG}\right)$, where $q$ is given as:
\begin{equation}
q(x) = \sqrt{\delta^2 + (x-\mu)^\intercal \Gamma^{-1} (x-\mu)}.
\end{equation}
More precisely, they are given by,
\begin{align}\label{zeta}
\zeta_i &\vcentcolon= E_{Z_i|X_i,\theta}[Z_i] = \frac{q(X_i)}{\alpha}\frac{K_{(d-1)/2}(\alpha q(X_i))}{K_{(d+1)/2}(\alpha q(X_i))},\\ \label{phi}
\phi_i &\vcentcolon= E_{Z_i|X_i,\theta}[1 / Z_i] = \frac{\alpha}{q(X_i)}\frac{K_{(d+1)/2}(\alpha q(X_i))}{K_{(d+3)/2}(\alpha q(X_i))},
\end{align}
$K_v$ is the modified Bessel function of the second kind with
index $v\in \R$.
Having calculated $Q(\theta|\theta^n)$, we now need to calculate the next term $\theta^{n+1} \vcentcolon= \underset{\theta}{\argmax} Q(\theta|\theta^n)$. This will be done by first calculating the gradient of $Q$. 
$$
\left\{
\begin{aligned}
\frac{\partial Q}{\partial \delta} &= N\sqrt{\alpha_{MNIG}^2 -\beta^\intercal\Gamma \beta} + \frac{N}{\delta} - \delta \sum_{i=1}^N \phi_i^n,\\
\frac{\partial Q}{\partial \alpha_{MNIG}} &=  \frac{N \delta \alpha}{\sqrt{\alpha_{MNIG}^2 -\beta^\intercal\Gamma \beta}} - \alpha \sum_{i=1}^N \zeta_i^n,\\
\frac{\partial Q}{\partial \mu} &= \Gamma^{-1}\sum_{i=1}^N \phi_i^n (X_i - \mu) - N\beta,\\
\frac{\partial Q}{\partial \beta} &= \sum_{i=1}^N X_i -N \mu - \frac{N\delta\Gamma \beta}{\sqrt{\alpha_{MNIG}^2 -\beta^\intercal\Gamma \beta}},\\
\frac{\partial Q}{\partial \Gamma} &= \frac{1}{2}\left(\Gamma^{-1}\sum_{i=1}^N(\phi_i^n (X_i -\mu)(X_i-\mu)^\intercal \Gamma^{-1}-N\Gamma^{-1}-\frac{N\delta \beta \beta^\intercal}{\sqrt{\alpha_{MNIG}^2 -\beta^\intercal\Gamma \beta}}\right).
\end{aligned}\right.
$$
To alleviate the expressions, we will denote $\overline{\phi}^n \vcentcolon = \frac{1}{N}\sum_{i=1}^N \phi_i^n$,
$\overline{\zeta}^n \vcentcolon = \frac{1}{N}\sum_{i=1}^N \zeta_i^n$, $\overline{X\phi}^n \vcentcolon = \frac{1}{N}\sum_{i=1}^N \phi_i^n X_i$ and $\overline{X} \vcentcolon =\frac{1}{N}\sum_{i=1}^N X_i$ . Setting the previous set of equations to $0$, we obtain,
\begin{align}
&\frac{1}{\delta} - \overline{\phi}^n\delta + \sqrt{\alpha_{MNIG}^2 -\beta^\intercal\Gamma \beta} = 0,\\
&\frac{\delta}{\sqrt{\alpha_{MNIG}^2 -\beta^\intercal\Gamma \beta}} - \overline{\zeta}^n = 0,\\
&\Gamma^{-1}\overline{X\phi}^n-\overline{\phi}^n\Gamma^{-1}\mu - \beta = 0,\\
&\overline{X}-\mu-\frac{\delta \Gamma \beta}{\alpha_{MNIG}^2 -\beta^\intercal\Gamma \beta} = 0,\\
&\left(\frac{1}{N}\sum_{i=1}^N\phi_i(X_i -\mu)(X_i - \mu)^\intercal\right) - \Gamma -\frac{\delta\Gamma\beta \beta^\intercal\Gamma}{\alpha_{MNIG}^2 -\beta^\intercal\Gamma \beta} = 0.
\end{align}
From the second equation we deduce that,
\begin{equation}\label{frac}
\frac{\delta}{\sqrt{\alpha_{MNIG}^2 -\beta^\intercal\Gamma \beta}} = \overline{\zeta}^n.
\end{equation}
Plugging this into the first equation gives us,
\begin{equation}\label{delta}
\delta = \frac{1}{\sqrt{\overline{\phi}^n-\frac{1}{\overline{\zeta}^n}}}.
\end{equation}
Thanks to the third equation, we have,
\begin{equation}\label{gammaBeta}
\Gamma \beta = \overline{X\phi}^n-\mu \overline{\phi}^n.
\end{equation}
Using this in the fourth equation, we obtain,
\begin{equation}\label{mu}
\mu = \frac{\overline{X}-\overline{\zeta}^n~\overline{X\phi}^n}{1 -\overline{\zeta}^n~\overline{\phi}^n}.
\end{equation}
Now that $\mu$ and $\Gamma \beta$ are explicitly known, denoting $R \vcentcolon = \frac{1}{N}\sum_{i=1}^N\phi_i(X_i -\mu)(X_i - \mu)^\intercal - \overline{\zeta}^n\Gamma\beta (\Gamma \beta)^\intercal$, we get from the fifth equation,
\begin{equation}\label{Gamma}
\Gamma = \frac{R}{\mathrm{det}(R)^{1/d}}.
\end{equation}
Going back to \eqref{gammaBeta}, we get,
\begin{equation}\label{beta}
\beta = \Gamma^{-1}(\overline{X\phi}^n- \overline{\phi}^n\mu).
\end{equation}
Finally, using \eqref{frac}, $\alpha_{MNIG}$ can be deduced as:
\begin{equation}\label{alpha}
\alpha = \sqrt{\left(\frac{\delta^2}{\overline{\zeta}^n}\right) + \beta^\intercal\Gamma\beta}.
\end{equation} 
In the following, we summarize the EM algorithm for the parameters estimation of MNIG distribution:
\begin{algorithm}
\caption{EM algorithm for parameters estimation of MNIG distribution}\label{algo}
\KwInput{Observations $(X_1,..., X_N)$, initial value~$\theta^0$, tolerance $\mathrm{tol}$ and $M$ number of iterations\;}
Set $\theta^n = \theta^0$ and compute $\phi^n$, $\zeta^n$ with \eqref{zeta} and \eqref{phi}\;
Compute $\theta^{n+1}$ using in order \eqref{delta}, \eqref{mu}, \eqref{gammaBeta}, \eqref{Gamma}, \eqref{beta} and  \eqref{alpha}\;
$n = 0$\;
\While{$||\theta^{n+1} - \theta^n|| \ge \mathrm{tol}$ and $n < M$}{
E-step: $\theta^n \leftarrow \theta^{n+1}$ and compute the new $\phi^n$ and $\zeta^n$ with \eqref{zeta} and \eqref{phi}\;
M-step: Compute $\theta^{n+1}$ using in order \eqref{delta}, \eqref{mu}, \eqref{gammaBeta}, \eqref{Gamma}, \eqref{beta} and  \eqref{alpha}\;
$n \leftarrow n+1$\;}
\KwOutput{Estimated parameters $\hat{\theta}$\;}
\end{algorithm}
The convergence properties of the EM algorithm are discussed in details in \citet{mclachlan2007algorithm}. However, to avoid getting stuck in a local maximum, we will need to run the algorithm from several starting points to ensure that the obtained maximum is the global one. We can also combine the algorithm with other numerical methods, such as Newton-Raphson algorithm, to speed up the convergence.
\subsection{Numerical aspects}
We applied the EM algorithm described in the above subsection to a data set of daily log return of three European stock indices (CAC 40, BEL 20 and AEX) for a period from $12/05/2020$ to  $10/05/2022$ obtained from the website of Euronext. The data set consisted of $514$ observations. In order to test the behavior of the algorithm, several initial values were considered. Note that the conditional expectations in the E-step do not involve the parameters $\beta$ and hence the convergence of the algorithm will not depend on the initial value of $\beta$. We fixed $\beta = (0, 0, 0)$ and we stopped the iterations when $||\theta^{n+1}-\theta^{n}|| < \mathrm{tol}$ for $\mathrm{tol} = 10^{-5}$ and $\mathrm{tol} = 10^{-10}$. The values of the estimates of estimates for initial values were the same and are given in the following:
\begin{align*}
\hat{\alpha} &= 365.78,\\
\hat{\delta} &= 0.00373,\\
\hat{\beta} &= (-64.28, 41.45, 7.35),\\
\hat{\mu} &= (0.00084, 0.00024, 0.00055),\\
\hat{\Gamma} &= \begin{pmatrix}
2.338 & 1.796 & 2.080\\
1.796 & 2.327 & 2.088\\
2.080 & 2.088 & 2.555
\end{pmatrix}.
\end{align*}
The number of iterations needed until convergence along with the computational (CT) time can be seen in \ref{fitting}. 
\begingroup
\setlength{\tabcolsep}{6pt} 
\renewcommand{\arraystretch}{1.2}
\begin{table}[H]
\centering
\begin{tabular}{cccccccc}
\hline
\multicolumn{4}{l}{Initial Values} & \multicolumn{2}{c}{$\mathrm{tol} = 10^{-5}$}& \multicolumn{2}{c}{$\mathrm{tol} = 10^{-10}$}\\
\hline
$\alpha$ & $\delta$ & $\mu$ & $\Gamma$ & Iterations & CT($\mathrm{ms}$) & Iterations & CT($\mathrm{ms})$\\
\hline
$1$ & $0$ & $(0,0,0)$ & $I_3$ & $93$ & $1267$ & $164$ & $2175$\\
$20$ & $0$ & $(0,0,0)$ & $I_3$ & $93$ & $1174$ & $164$ & $2139$\\
$1$ & $2$ & $(0,0,0)$ & $I_3$ & $194$ & $2428$ & $265$ & $3377$\\
$20$ & $2$ & $(0,0,0)$ & $I_3$ & $453$ & $5543$ & $524$ & $6471$\\
$1$ & $0$ & $(1,1,1)$ & $I_3$ & $189$ & $2425$ & $260$ & $3391$\\
$20$ & $0$ & $(1,1,1)$ & $I_3$ & $410$ & $5058$ & $481$ & $5898$ \\
$1$ & $2$ & $(1,1,1)$ & $I_3$ & $206$ & $2563$ & $217$ & $3393$\\
$20$ & $2$ & $(1,1,1)$ & $I_3$ & $558$ & $6658$ & $629$ & $7644$\\
$1$ & $0$ & $(0,0,0)$ & $2I_3$ & $94$ & $1237$ & $165$ & $2078$\\
$20$ & $0$ & $(0,0,0)$ & $2I_3$ & $94$ & $1154$ & $165$ & $2100$\\
$1$ & $2$ & $(0,0,0)$ & $2I_3$ & $194$ & $2383$ & $265$ & $3315$\\
$20$ & $2$ & $(0,0,0)$ & $2I_3$ & $453$ & $5536$ & $524$ & $6558$\\
$1$ & $0$ & $(1,1,1)$ & $2I_3$ & $177$ & $2160$ & $248$ & $3032$\\
$20$ & $0$ & $(1,1,1)$ & $2I_3$ & $328$ & $3947$ & $399$ & $4876$ \\
$1$ & $2$ & $(1,1,1)$ & $2I_3$ & $201$ & $2468$ & $272$ & $3282$\\
$20$ & $2$ & $(1,1,1)$ & $2I_3$ & $509$ & $6208$ & $580$ & $7118$\\
\hline
\end{tabular}
\caption{Number of iterations and computational time for various initial values and stopping criteria.}
\label{fitting}
\end{table}
\endgroup

\bibliographystyle{apa}
\bibliography{biblio}  






\end{document}